\newtheorem{thm}{Theorem}
\newtheorem{cor}[thm]{Corollary}
\newtheorem{lem}[thm]{Lemma}
\newtheorem{prop}[thm]{Proposition}
\newtheorem{defn}[thm]{Definition}
\newtheorem{rmk}[thm]{Remark}
\newcommand{\id}{\mathrm{id}}
\newcommand{\NN}{{\mathbb N}}
\newcommand{\un}{{\mathbf 1}}
\def\scal#1{{\langle#1\rangle}}
\def\zero#1{{\langle#1\rangle}}
\title[Algebraic groups in non-commutative probability]{Algebraic groups in non-commutative\\ probability theory revisited\\[0.2cm]
}
\author[I.~Chevyrev]{Ilya Chevyrev}
\address{School of Mathematics, The University of Edinburgh, and The Maxwell
Institute for the Mathematical Sciences, James Clerk Maxwell Building, The King's Buildings,
Edinburgh, EH9 3FD, United Kingdom}
\email{ichevyrev@gmail.com}
\urladdr{https://ilyachevyrev.wordpress.com/}
\author[K.~Ebrahimi-Fard]{Kurusch~Ebrahimi-Fard}
\address{Department of Mathematical Sciences, Norwegian University of Science and Technology, NO 7491 Trondheim, Norway.}
\email{kurusch.ebrahimi-fard@ntnu.no}
\urladdr{https://folk.ntnu.no/kurusche/}
\author[F.~Patras]{Fr\'ed\'eric~Patras}
\address{Univ.~C\^ote d'Azur, CNRS, UMR 7351, Parc Valrose, 06108 Nice Cedex 02, France.}
\email{patras@unice.fr}
\urladdr{www-math.unice.fr/$\sim$patras}
\begin{document}

\begin{abstract}
The role of coalgebras as well as algebraic groups in non-commutative probability has long been advocated by the school of von Waldenfels and Sch\"urmann. Another algebraic approach was introduced more recently, based on shuffle and pre-Lie calculus, and results in another construction of groups of characters encoding the behaviour of states. Comparing the two, the first approach, recast recently in a general categorical language by Manzel and Sch\"urmann, can be seen as largely driven by the theory of universal products, whereas the second construction builds on Hopf algebras and a suitable algebraization of the combinatorics of noncrossing set partitions. Although both address the same phenomena, moving between the two viewpoints is not obvious. We present here an attempt to unify the two approaches by making explicit the Hopf algebraic connections between them. Our presentation, although relying largely on classical ideas as well as results closely related to Manzel and Sch\"urmann's aforementioned work, is nevertheless original on several points and fills a gap in the non-commutative probability literature. In particular, we systematically use the language and techniques of algebraic groups together with shuffle group techniques to prove that two notions of algebraic groups naturally associated with free, respectively Boolean and monotone, probability theories identify. We also obtain explicit formulas for various Hopf algebraic structures and detail arguments that had been left implicit in the literature.\\

\noindent{\footnotesize{Key words: universal products; free product; states; cumulants; additive convolution; shuffle algebra; pre-Lie algebra; Hopf algebra.}}\\
\noindent{\footnotesize{MSC classes: 16T05, 16T10, 16T30, 17A30, 46L53, 46L54}}
\end{abstract}

\maketitle


\tableofcontents



\section{Introduction}

The school of von Waldenfels and Sch\"urmann has advocated the use of coalgebras as well as algebraic groups and their Hopf algebras of coordinate functions in the context of non-commutative probability theory \cite{bengohrschur,bengohrschur_02,bengohrschur_05,schuermann93,vWaldenfels85}. More recently, two of the present authors introduced a different approach, which is mainly driven by shuffle and pre-Lie calculus, leading to another construction of groups of characters \cite{ebrahimipatras_15,ebrahimipatras_16,ebrahimipatras_17,ebrahimipatras_19,ebrahimipatras_20}. The first construction can be understood as largely determined by the theory of universal products and additive free, monotone and boolean convolutions, whereas the second builds on a suitable algebraization of the combinatorics of lattices of set partitions and is encoded into a Hopf algebra structure on $T(T^+(\mathcal A))$, the double tensor algebra over an algebra $\mathcal A$ of non-commutative random variables. Although both approaches address the same phenomena, moving from one to the other has no obvious mathematical formulation.

The goal of the work at hand is to present a unified Hopf algebraic point of view that permits to compare the different approaches and to make the connections transparent by including explicit formulas on the Hopf algebraic structures. The key object appears to be $S(T^+(\mathcal A))$, the Hopf algebra of polynomials over the tensor algebra over $\mathcal A$. Our exposition of results, while relying on classical ideas, is original on several points. Moreover, we obtain explicit formulas for the Hopf algebraic structures and detail arguments that have been sometimes left implicit in the literature.

Although we have tried to give a self-contained presentation, we need to assume some familiarity with the foundations of non-commutative probability theory as well as the theory of Hopf algebras. The reader who would like to see more details on non-commutative probability and related topics is referred to \cite{biane_02,mingospeicher_17,nicaspeicher_06,voiculescu_92,voiculescu_95}. Detailed studies of the notion of universal product in non-commutative probability can be found in  \cite{bengohrschur_02,manzelschuermann_16,muraki_03,speicher_97b}. Regarding background and details on Hopf and pre-Lie algebras, in particular of combinatorial nature, we refer the reader to the book \cite{cp2021}. 

\medskip

The paper is organised as follows. The next section introduces a collection of notations and conventions used throughout the paper. In Section \ref{sect:group}, we consider the group structure coming from convolution in the case of classical probability. Section \ref{sect:up} presents the notion of universal product in the context of non-commutative probability. Section \ref{sect:GS} recalls the shuffle algebra approach to moment-cumulant relations in non-commutative probability. Sections \ref{sec:monotonestate},  \ref{sect:bg} and \ref{sect:fg}, discuss the monotone, free and Boolean group structures for states.

\medskip

\noindent {\bf{Acknowledgements}}: The authors would like to thank the anonymous referee for their detailed comments that helped to improve the presentation of the paper.

This work was partially supported by the project “Pure Mathematics in Norway”, funded by Trond Mohn Foundation and Troms{\o} Research Foundation.
IC acknowledges support by the EPSRC via the New Investigator Award EP/X015688/1.
KEF was supported by the Research Council of Norway through project 302831 “Computational
Dynamics and Stochastics on Manifolds” (CODYSMA). FP acknowledges support from the European Research Council (ERC) under the European Union’s Horizon 2020 research and innovation program (Duall project, grant agreement No. 670624), from the ANR project Algebraic Combinatorics, Renormalization, Free probability and Operads -- CARPLO (Project-ANR-20-CE40-0007) and from the ANR--FWF project PAGCAP (ANR-21-CE48-0020, Austrian Science Fund (FWF), grant I 5788).


\section{General notation and conventions}
\label{sec:notation}
 Let $\NN^\ast$ denote the positive integers. For any $n \in \NN^\ast$, we define the sets $[n] := \{1,\ldots, n\}$ and $\zero{n} := \{0, \ldots, n-1\}$. Throughout the article, $(\mathcal{A},\varphi)$ stands for a non-commutative probability space. This means that $\mathcal{A}$ is an associative algebra with unit $1_\mathcal{A}$ over a field $\mathbb{K}$ of characteristic zero (usually taken to be the complex numbers in concrete applications). See the standard reference \cite{nicaspeicher_06} for more details.  It is important to distinguish the unit of $\mathcal{A}$ from the unit of the ground field: we denote the former $1_\mathcal{A}$ (or $1$) and the latter $\mathbf 1$. The map $\varphi$ is an unital $\mathbb{K}$-valued linear form on $\mathcal{A}$, i.e., $\varphi(1_\mathcal{A}) = \mathbf 1$. The elements of $\mathcal{A}$ play the role of random variables, while the map $\varphi$ should be considered as the expectation. 

\medskip

{\bf{Classical cumulants}} are defined in probability where the algebra $\mathcal{A}$ of random variables is commutative, as a family of multilinear functionals $\{c_n: \mathcal{A}^n \to \mathbb{C}\}_{n\geq1}$ iteratively determined by classical moment-cumulant relations
\begin{equation}
\label{eq:clMCrel}	
	\varphi(a_1\cdots a_n) = \sum_{\pi\in P_n} \prod_{\pi_i \in \pi} c_{|\pi_i|}(a_{1},\ldots,a_{n}| \pi_i),
\end{equation}
see~\cite[Sec.~3.2]{peccati_taqqu_11}.
The sum on the right hand side runs over all set partitions of $\{1,\ldots,n\}$. Here and below we write for a block $\pi_i=\{i_1,\ldots,i_{|\pi_i|}\}$ in a set partitions $\pi=\{\pi_1,\ldots,\pi_m\} \in P_n$ and a multilinear functional $f$
$$
	f_{|\pi_i|}(a_{1},\ldots,a_{n}| \pi_i) := f_{|\pi_i|}(a_{i_1},\ldots,a_{i_{|\pi_i|}}).
$$ 
\linebreak
\indent {\bf{Free, Boolean, and monotone cumulants}} in non-commutative probability provide effective combinatorial tools for studying non-commutative distributions of random variables. Free cumulants \cite{speicher1994multiplicative,nicaspeicher_06} form a family of multilinear functionals $\{\text{k}_n: \mathcal{A}^n \to \mathbb{C}\}_{n\geq1}$ which are recursively defined by free moment-cumulant relations
\begin{equation}
\label{eq:freeMCrel}	
	\varphi(a_1\cdots a_n) = \sum_{\pi\in NC_n} \prod_{\pi_i \in \pi} \text{k}_{|\pi_i|}(a_{1},\ldots,a_{n}| \pi_i).
\end{equation}
Boolean cumulants \cite{speicher_97a} form another family of multilinear functionals $\{ \text{b}_n:\mathcal{A}^n\to\mathbb{C}\}_{n\geq1}$ recursively defined by Boolean moment-cumulant relations
\begin{equation}
\label{eq:booleanMCrel}
	\varphi(a_1\cdots a_n)= \sum_{\pi\in \operatorname{Int}_n}  \prod_{\pi_i \in \pi}  \text{b}_{|\pi_i|}(a_{1},\ldots,a_{n}| \pi_i).
\end{equation}
Monotone cumulants \cite{hasebesaigo_11} form a family of multilinear functionals $\{h_n:\mathcal{A}^n\to\mathbb{C}\}_{n\geq1}$ recursively defined by the monotone moment-cumulant relations
\begin{equation}
\label{eq:monotoneMCrel}
	\varphi(a_1\cdots a_n)= \sum_{\pi\in NC_n} \frac{1}{t(\pi)!} \prod_{\pi_i \in \pi} h_{|\pi_i|}(a_{1},\ldots,a_{n}| \pi_i).
\end{equation}
The sums on the right hand sides of \eqref{eq:freeMCrel} and \eqref{eq:monotoneMCrel} run over the lattice $NC_n$ of non-crossing partitions of the set $\{1,\ldots,n\}$. The sum on the right hand side of \eqref{eq:booleanMCrel} on the other hand is defined over the lattice $\operatorname{Int}_n \subset NC_n$ of interval partitions (i.e., where each block is an interval).

\medskip

{\bf{Tensor algebras}}:
The non-unital tensor algebra $\bigoplus_{n > 0} \mathcal{A}^{\otimes n}$ over $\mathcal{A}$ is denoted $T^+(\mathcal{A})$. On the other hand, the unital tensor algebra is obtained by adding a copy of the ground field, that is, $T(\mathcal{A}):=\mathbb{K} \oplus T^+(\mathcal{A})$. The same conventions apply for the tensor (resp.~non-unital tensor) algebra $T(V)$ (resp.~$T^+(V)$) over a vector space $V$.

In general, when $B=\bigoplus_{n\in\NN}B_n$ is a graded connected algebra (meaning that the degree zero component of the graded algebra is isomorphic to the ground field), we will write $B^+$ for its augmentation ideal, that is, $\bigoplus_{n\in\NN^\ast}B_n$.

Tensors $a_1 \otimes \dots \otimes a_n \in T(\mathcal{A})$ are written as words $a_1 \cdots a_n$. They have to be distinguished from the product of the $a_i$ seen as elements in the algebra $\mathcal{A}$, which is written $a_1\cdot_{\!\!\scriptscriptstyle{\mathcal{A}}} \cdots  \cdot_{\!\scriptscriptstyle{\mathcal{A}}} a_n$. The unit $\mathbf 1$ of the ground field is identified with the empty word $\emptyset$. The word $a \cdots a \in \mathcal{A}^{\otimes n}$ is denoted $a^{ n}$ -- and should not be confused with the $n$-th power of $a$ in $\mathcal{A}$, which is written $a^{\cdot_{\!\!\scriptscriptstyle{\mathcal{A}}} n}$. 

The natural algebra structure on $T^+(\mathcal{A})$ refers to the concatenation product of words, which turns it into a non-commutative graded algebra with the natural degree of a word $w=a_1 \cdots a_n$ being its length, i.e., its number of letters, $\deg(w):=n$. In the unital case the empty word is of length zero and, as we saw already, plays the role of the unit for the concatenation product.

We use here the convention that identifies the algebra of polynomials $S(V)$ (also known as the symmetric tensor algebra) over a vector space $V$ with the space of covariants $\bigoplus_{n\geq 0}T^n(V)/{S_n}$, where $S_n$ is the $n$-th symmetric group acting on tensors of length $n$ by permutation:
$$
	\forall\sigma\in S_n,\ v_1,\dots,v_n\in V,\quad 
	\sigma(v_1\cdots v_n):=v_{\sigma^{-1}(1)}\cdots v_{\sigma^{-1}(n)}.
$$ 
The augmentation ideal $S^+(V)$ is $\bigoplus_{n> 0}T^n(V)/{S_n}$. The same approach is used for the algebra of polynomials $S(X)$ over a set $X$.

We define then $S(T^+(\mathcal{A}))$ and $S^+(T^+(\mathcal{A}))$ to be the polynomial algebra, respectively the non-unital polynomial algebra over $T^+(\mathcal{A})$. We again identify $\mathbf 1\in \mathbb{K}$ with the unit of $S(T^+(\mathcal{A}))$ to distinguish it from the unit of $\mathcal A$ and thus treat $\mathbb{K}$ as a subspace of $S(T^+(\mathcal{A}))$.
Notice that, as the ground field is of characteristic zero, there is a canonical bijection between invariants and covariants, and $S(T^+(\mathcal{A}))$ identifies with the space of symmetric tensors over $T^+(\mathcal{A})$. See \cite[Sect.~2.4]{cp2021} for a general discussion of polynomial (Hopf) algebras.

Elements in $S(T^+(\mathcal{A}))$ are represented using a parenthesized bar-notation, i.e., $(w_1 | \cdots | w_n) \in S(T^+(\mathcal{A}))$, where the words $w_i \in T^+(\mathcal{A})$, $i=1,\ldots,n$. The algebra $S(T^+(\mathcal{A}))$ is commutative with the bar-product $a|b := (w_1 | \cdots | w_n | v_1 | \cdots | v_m)$ for $a= (w_1 | \cdots | w_n)$ and $b= ( v_1 | \cdots | v_m)$. We denote by $\deg(a):=\deg(w_1)+\dots+\deg(w_n)$ the degree of an element $a\in S(T^+(\mathcal{A}))$.

Analogous conventions hold for both the non-unital and unital double tensor algebras, defined to be $T^+(T^+(\mathcal{A})):=\bigoplus_{n > 0} T^+(\mathcal{A})^{\otimes n}$, respectively $T(T^+(\mathcal{A})):=\bigoplus_{n \ge 0} T^+(\mathcal{A})^{\otimes n}$. Elements are denoted using the bracket-bar notation, $[w_1 | \cdots | w_n] \in T^+(T^+(\mathcal A))$, for words $w_i \in T^+(\mathcal{A})$, $i=1,\ldots,n$.
\medskip

{\bf{Set theoretic conventions}}:
Consider now a canonically ordered subset $S:=\{s_1, \ldots , s_m\}$ of the set  $[n]$. For a word $a_1 \cdots a_n \in T^+(\mathcal{A})$, we set $a_S := a_{s_1} \cdots a_{s_m} \in T^+(\mathcal{A})$ and $a_\emptyset:=\mathbf{1}$. A connected component $S':=\{s_i,\ldots,s_{i+j}\} \subseteq S \subseteq [n]$ is a maximal sequence of successive elements in $S$, i.e.,
$s_{i+k}=s_i+k$ for all $1\leq k\leq j$, and $s_{i-1}<s_i-1$ and $s_{i+j+1}>s_{i+j}+1$ whenever $i>1$ and $i+j<m$ respectively.
For any $S \subseteq [n]$ we denote by $S_1, \ldots, S_p$ and $J^S_1, \ldots, J^S_k$ the connected components of $S$ and of $[n] - S$, respectively. Both are naturally ordered in terms of their minimal (or maximal) elements. For example, if $n=7$ and $S=\{2,3,6\}$, we get $p=2$ connected components of $S$, $S_1=\{2,3\}$, $S_2=\{6\}$ and $k=3$ connected components of $[n]-S$, $J^S_1=\{1\}$, $J^S_2=\{4,5\}$, $J^S_3=\{7\}$.

For two partitions $\pi_1,\pi_2$ of $[n]$, we say that $\pi_1$ is finer than $\pi_2$ and write $\pi_1\leq \pi_2$ if every block of $\pi_1$ is a subset of one block in $\pi_2$.
The relation $\leq$ turns the set of partitions into a lattice.
With this notation, $\pi_{\operatorname{Int}}(S):=\{S_1,\ldots,S_p,J^S_1,\ldots, J^S_k\}$ is the maximal interval partition which is finer than the two-block partition $\{S,[n]-S\}$.
We note that this is a dual closure map in the sense of~\cite[1.4~Closures]{blyth2005}.


\section{Group structures from convolutions: the classical case}
\label{sect:group}

Let us start with classical probability as a template for the non-commutative constructions which are to come in the next section. The following presentation revisits ideas from classical probability in an algebraic language recasting them in the setting presented in the recent work \cite{efptz}, which formulates moment-cumulant relations as well as Wick polynomials in the framework of algebraic groups.

We assume that all distributions of random variables have determinate moment problem, that is, they are entirely characterized by the sequence of all their moments (think for example to random variables with compact support, Gaussian variables, or alike; see \cite{Akhiezer} as a standard reference on these questions). This hypothesis arises from a parallel assumption made within the combinatorial approach to free probability, where the distribution of a random variable is identified with the sequence of its moments at all orders. See for example the definition of distributions in free probability in the standard reference \cite[Chapter 1]{nicaspeicher_06}.

Consider now a commutative unital algebra $\mathcal A$ of classical random variables with expectation value operator $\mathbb E$. It is convenient to encode distributions of random variables by the linear form still written $\mathbb E:S(\mathcal A)\to \mathbb K,$
$$ 
	a_1\cdots a_n\longmapsto 
	\mathbb E(a_1\cdots a_n)=
	\mathbb E[a_1\cdot_{\!\!\scriptscriptstyle{\mathcal{A}}} \cdots 
	\cdot_{\!\!\scriptscriptstyle{\mathcal{A}}}  a_n],
$$
where $\mathbb E(\mathbf 1):=1$ and $a_1\cdots a_n$ stands for a monomial of degree $n$ in the polynomial algebra $S(\mathcal A)$ (not to be confused with $a_1\cdot_{\!\!\scriptscriptstyle{\mathcal{A}}} \cdots \cdot_{\!\!\scriptscriptstyle{\mathcal{A}}}  a_n$, the product of the $a_i$ in $\mathcal A$). The distribution of an element $a\in \mathcal A$ is then obtained as the sequence of its moments $\mathbb E (a^n)$. The (additive) convolution of two distributions of scalar-valued random variables $X$, $Y$ belonging to $\mathcal A$  is then defined as the distribution of the sum $X'+Y'$, where $X'$ and $Y'$ are independent and have the same laws as $X$ and $Y$. The simplest way to make this process effective is to construct the tensor product algebra $\mathcal A\otimes\mathcal A$ equipped with the expectation value operator $\mathbb E\otimes\mathbb E$. Given two integrable functions $f(X)$ and $g(Y)$, of $X$ respectively  $Y$, one gets
$$
	\mathbb E\otimes\mathbb E(f(X)\otimes g(Y))=\mathbb E(f(X))\mathbb E(g(Y)),
$$
which shows that $X':=X\otimes 1$ and $Y':=1\otimes Y$ are two independent copies in $\mathcal A\otimes\mathcal A$ of the random variables $X$ and $Y$. 

This process dualizes as follows.
\begin{defn} 
Let $\phi$ and $\psi$ be two unital linear forms on $S(\mathcal A)$. The (additive) convolution product of $\phi$ and $\psi$ is the unital linear form on $S(\mathcal A)$ defined by:
$$
	\phi\ast\psi(\mathbf 1):=1,
$$
\begin{equation*}
	\phi\ast\psi(a_1\cdots a_n)
	:=\phi\otimes\psi((a_1\otimes \mathbf 1+\mathbf 1\otimes a_1)\cdots (a_n\otimes \mathbf 1+\mathbf 1\otimes a_n)).
\end{equation*}
\end{defn}
\begin{cor}
If $X$ and $X'$ are two independent identically distributed (i.i.d.) random variables, then the distribution of $X+X'$ with respect to $\mathbb E$ identifies with the distribution of $X$ with respect to  $\mathbb E\ast \mathbb E$.\end{cor}

Furthermore, as $\phi\ast\psi(a_1\cdots a_n)=\phi(a_1\cdots a_n)+\psi(a_1\cdots a_n)+ \text{l.o.t.},$ where l.o.t.~refers to a sum of products of evaluations of $\phi$ and $\psi$ on tensors of length strictly less than $n$, a standard inductive argument (on the length of tensors) that we omit shows that the $\ast$-product defines a group structure on the set of unital linear forms on $S(\mathcal A)$. 

\begin{defn} We call the set $\mathcal G$  of unital linear forms on $S(\mathcal A)$, equipped with the convolution product, the group of generalized measures on $\mathcal A$.
\end{defn}

Let us assume now that $\mathcal A$ has a basis $\mathcal B=(b_i)_{i\in\NN}$. Think for example of a Gaussian family $\{\Omega_1,\dots,\Omega_n\}$ and the algebra spanned by monomials in the $\Omega_1,\dots,\Omega_n$ (which is a dense subalgebra of the square integrable functions).

Given a sequence $I=(i_1,\dots,i_k)$ with $0\leq i_1\leq\dots \leq i_k$, we set $b_I:=b_{i_1}\cdots b_{i_k}\in S(\mathcal A)$. As usual, $b_I$ should \it not \rm be confused with the product $b_{i_1}\cdot_{\!\!\scriptscriptstyle{\mathcal{A}}}\cdots \cdot_{\!\!\scriptscriptstyle{\mathcal{A}}} b_{i_k}$ in $\mathcal A$. The set $\mathcal X$ of all monomials $b_I$ defines then a (complete) system of coordinates on $\mathcal G$, where, for an element $\phi\in\mathcal G$, we specify
$$
	\langle\phi |b_I\rangle:= \phi(b_I).
$$
Here complete refers to the fact that this set $\mathcal X$ of coordinate functions induces a bijective correspondence between elements of $\mathcal G$ and families of scalars indexed by sequences of integers: 
$$
	\phi\longmapsto \{\phi(b_I)\}_{I}.
$$
The definition of the group law on $\mathcal G$ implies
$$
	\langle\phi\ast\psi |b_I\rangle=\sum\limits_{A\coprod B = [|I|]} 
	\langle\phi|b_{A,I}\rangle\langle\psi|b_{B,I}\rangle,
$$
where, if $A=\{j_1,\ldots ,j_k\}$, $b_{A,I}:=b_{i_{j_1}} \cdots b_{i_{j_k}}$. This shows that the $b_I$ are representative functions on the group $\mathcal G$. See \cite[2.7 Representative functions]{cp2021} for details. By standard arguments, the polynomial algebra $S(\mathcal X)$ is a Hopf algebra whose coproduct $\delta$ is given on generators by 
\begin{equation}
\label{copro1}
	\delta(b_I)=\sum\limits_{A\coprod B= [|I|]} b_{A,I}\otimes b_{B,I},
\end{equation}
and the following theorem holds (see \cite[3.5 Algebraic groups]{cp2021}).

\begin{thm}
The (commutative) group $\mathcal G$ of generalized measures is, up to a canonical isomorphism, the group of characters (algebra maps to the ground field) of the (commutative and cocommutative) Hopf algebra $S(\mathcal X)$, with coproduct given by \eqref{copro1}. 
\end{thm}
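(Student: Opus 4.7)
The plan is to construct an explicit bijection between $\mathcal G$ and the character group $\mathrm{Char}(\mathcal H)$ of $\mathcal H = \mathbb K[\mathcal X]$ at the level of sets, then verify that this bijection intertwines the convolution product $\ast$ on $\mathcal G$ with the convolution of characters induced by the coproduct $\delta$.

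First, I would set up the bijection. Since $\mathcal B=(b_i)_{i\in\NN}$ is a basis of $\mathcal A$ with $b_0=1_\mathcal A$, the ordered monomials $b_I$ form a basis of $\mathbb K[\mathcal A]$, so a unital linear form $\phi \in \mathcal G$ is uniquely determined by the family of scalars $\{\phi(b_I)\}_I$ (with the convention $\phi(\mathbf 1)=1$). On the other hand, $\mathcal H = \mathbb K[\mathcal X]$ is the polynomial algebra freely generated by the symbols $b_I$, so by the universal property a character $\chi : \mathcal H \to \mathbb K$ is determined by its values $\chi(b_I) \in \mathbb K$ on the generators, and any such assignment (with $\chi(\mathbf 1)=1$) extends uniquely to an algebra morphism. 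Hence $\phi \longleftrightarrow \chi_\phi$, where $\chi_\phi(b_I) := \phi(b_I)$, is a canonical bijection $\mathcal G \xrightarrow{\sim} \mathrm{Char}(\mathcal H)$.

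Next I would check that this bijection is a group homomorphism. The convolution of two characters of the Hopf algebra $\mathcal H$ is by definition $\chi_1 \ast \chi_2 := (\chi_1 \otimes \chi_2) \circ \delta$. Applying this to a generator $b_I$ and using the coproduct formula (\ref{copro1}) yields
$$
(\chi_\phi \ast \chi_\psi)(b_I) = \sum_{A \coprod B = [|I|]} \chi_\phi(b_{A,I})\, \chi_\psi(b_{B,I}) = \sum_{A \coprod B = [|I|]} \phi(b_{A,I})\, \psi(b_{B,I}).
$$
On the other side, expanding the product in (\ref{concl}) by distributivity produces exactly $2^{|I|}$ terms indexed by the $2$-colorings of $[|I|]$, i.e.\ by ordered pairs $(A,B)$ with $A \coprod B = [|I|]$, and each such term contributes $\phi(b_{A,I})\, \psi(b_{B,I})$. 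Thus $(\phi \ast \psi)(b_I)$ equals the right-hand side displayed above, so $\chi_{\phi \ast \psi} = \chi_\phi \ast \chi_\psi$ on generators. Both characters agree on the unit, and since they are algebra morphisms out of a polynomial algebra, agreement on generators forces equality.

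I expect no serious obstacle; the argument is essentially a tautology once one observes that the coproduct (\ref{copro1}) has been engineered precisely so that the deconcatenation into ordered pairs $(A,B)$ mirrors the binomial expansion defining the additive convolution (\ref{concl}). The only point requiring a bit of care is the bookkeeping between $\phi$, which is defined on the whole of $\mathbb K[\mathcal A]$, and $\chi_\phi$, which lives on the free polynomial algebra $\mathbb K[\mathcal X]$ on the symbols $b_I$; this is handled by the fact that the $b_I$ form a $\mathbb K$-linear basis of $\mathbb K[\mathcal A]$, so linearly extending any $\mathbb K$-valued function defined on $\mathcal X$ to $\mathbb K[\mathcal A]$ and, independently, multiplicatively extending the same function to $\mathbb K[\mathcal X]$, produce compatible objects. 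Inverses in $\mathcal G$ (needed to ensure $\mathcal G$ is actually a group and not merely a monoid) are already guaranteed by the inductive argument sketched earlier in the text, and on the character side they are supplied by the antipode of the graded connected commutative Hopf algebra $\mathcal H$.
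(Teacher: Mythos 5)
Your proposal is correct and follows essentially the same route as the paper: the key computation, namely that $(\phi\ast\psi)(b_I)=\sum_{A\coprod B=[|I|]}\phi(b_{A,I})\psi(b_{B,I})$ matches the coproduct (\ref{copro1}), is exactly the representative-function identity the paper establishes just before the theorem, and the rest of your argument (completeness of the coordinates $b_I$, the universal property of the polynomial algebra $\mathbb K[\mathcal X]$, inverses via the inductive argument and the antipode of the graded connected Hopf algebra) is precisely the ``standard arguments'' the paper delegates to the cited reference. In short, you have simply written out in full the duality argument the paper invokes, with no gaps.
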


\begin{rmk} 
A terminological precision: it follows from general arguments (namely the fundamental theorem for coalgebras), but is easy to check directly here, that the Hopf algebra $S(\mathcal X)$ is a direct limit of commutative Hopf subalgebras which are finitely generated as algebras. The group $\mathcal G$ is therefore an inverse limit of affine algebraic groups (groups of characters on a finitely generated commutative Hopf algebra), that is, a pro-affine algebraic group.
\end{rmk}

Let us show briefly why these constructions are meaningful for probability. Assume again that we consider a given expectation value functional $\mathbb E$ on $\mathcal A$ and view it as an element of $\mathcal G$. Let us write $\star$ for the convolution product of linear forms on $S(\mathcal X)$: given $\lambda,\beta\in S(\mathcal X)^\ast$,
$$
	\lambda\star\beta(b_{I_1}\cdots b_{I_k}):=(\lambda\otimes\beta)\circ\delta (b_{I_1}\cdots b_{I_k}).
$$
The product map $\ast$ on $\mathcal G$ is the restriction of $\star$ to the set of characters -- so, the two products agree on $\mathcal G$, but we prefer to keep a notational distinction between them for the sake of clarity.

\begin{prop}
\label{cumformu}
For elements $a_1,\dots ,a_n \in \mathcal A$, we have
$$ 
	\log^\star(\mathbb E)(a_1\cdots a_n)= c_n(a_1,\ldots ,a_n),
$$
where $c_n$ stands for the multivariate classical cumulant map as in~\eqref{eq:clMCrel}.
\end{prop}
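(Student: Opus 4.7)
The plan is to evaluate $\log^\star(\mathbb E)(a_1\cdots a_n)$ directly by expanding the series and comparing the result with the classical Möbius-inverted formula for multivariate cumulants. Since the element $b_I = a_1\cdots a_n$ has finite length $n$, the series
$$
\log^\star(\mathbb E) \;=\; \sum_{k\geq 1}\frac{(-1)^{k-1}}{k}(\mathbb E-\epsilon)^{\star k}
$$
applied to $b_I$ truncates after $k=n$, so there is no analytic issue: every computation is a finite sum and the identity is purely combinatorial.

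First, I would iterate the coproduct (\ref{copro1}): for any $k\geq 1$,
$$
\delta^{k-1}(b_I) \;=\; \sum_{[n]=A_1\sqcup\cdots\sqcup A_k}\, b_{A_1,I}\otimes\cdots\otimes b_{A_k,I},
$$
where the $A_j$ are allowed to be empty. Because $(\mathbb E-\epsilon)(b_A)$ equals $\mathbb E(b_{A,I})$ when $A\neq \emptyset$ and vanishes otherwise, one obtains
$$
(\mathbb E-\epsilon)^{\star k}(b_I) \;=\; \sum_{\substack{[n]=A_1\sqcup\cdots\sqcup A_k\\ A_j\neq\emptyset}}\, \prod_{j=1}^{k}\mathbb E(b_{A_j,I}),
$$
a sum over ordered set partitions of $[n]$ into exactly $k$ non-empty blocks.

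Next, I would pass from ordered to unordered set partitions. Since the product $\prod_{j}\mathbb E(b_{A_j,I})$ depends only on the unordered collection of blocks, each unordered partition $\pi\in P(n)$ with $|\pi|=k$ blocks contributes $k!$ times, giving
$$
\log^\star(\mathbb E)(b_I) \;=\; \sum_{k\geq 1}\frac{(-1)^{k-1}}{k}\,k!\!\!\sum_{\substack{\pi\in P(n)\\|\pi|=k}}\prod_{B\in\pi}\mathbb E(b_{B,I}) \;=\; \sum_{\pi\in P(n)}(-1)^{|\pi|-1}(|\pi|-1)!\prod_{B\in\pi}\mathbb E(b_{B,I}).
$$
Now $\mathbb E(b_{B,I})=\mathbb E\!\left[\prod_{i\in B}a_i\right]$ by the definition of $\mathbb E$ as a linear form on $\mathbb K[\mathcal A]$, so the right-hand side is precisely the classical Möbius inversion formula expressing $c_n(a_1,\dots,a_n)$ in terms of moments, using $\mu(\pi,\hat 1)=(-1)^{|\pi|-1}(|\pi|-1)!$ on the partition lattice.

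I do not expect any real obstacle. The only point requiring care is the distinction between the ordered composition appearing from the iterated coproduct and the unordered partition appearing in the combinatorial formula for $c_n$: the factor $k!/k=(k-1)!$ is what converts the naive convolution-log expansion into the correct Möbius coefficient. Everything else is bookkeeping once one remembers Theorem \ref{thm:classical} and the fact that $\mathbb E$ is a character of $\mathcal H$, so that convolution powers can be computed via $\delta^{k-1}$.
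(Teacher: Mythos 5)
Your proof is correct and follows essentially the same route as the paper: iterate the coproduct \eqref{copro1}, expand $\log^\star(\mathbb E)$ as a sum over ordered set partitions into non-empty blocks, and pass to unordered partitions via the factor $k!/k=(k-1)!$ to recover the Leonov--Shiryaev (M\"obius) formula for $c_n$. You merely spell out the bookkeeping (empty blocks killed by $\mathbb E-\epsilon$, ordered versus unordered partitions) that the paper leaves implicit.
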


\begin{proof}
Iterating the coproduct formula \eqref{copro1} gives
\begin{align*}	 
	\log^\star(\mathbb E)(a_1\cdots a_n)
	&=\sum\limits_{k=1}^n\frac{(-1)^{k-1}}{k}\sum\limits_{P \in OP^k_n}\mathbb E[a_{P_1}]\cdots \mathbb  E[a_{P_k}]\\
	&=\sum\limits_{k=1}^n(-1)^{k-1}(k-1)! \sum\limits_{P \in P^k_n}\mathbb E[a_{P_1}]\cdots \mathbb E[a_{P_k}],
\end{align*}
where $OP^k_n$ and $P^k_n$ stand for the set of ordered partitions of length $k$ respectively the set of partitions of length $k$ of $[n]$. The last term provides the Leonov--Shiryaev definition of multivariate cumulants, see~\cite[Sec.~3.2]{peccati_taqqu_11}. The combinatorial study of the moments/cumulants relations actually goes back at least to Sch\"utzenberger's thesis \cite{schutz}; see for instance  \cite[Eq.~(12)]{efptz} for details on an Hopf algebraic account).
\end{proof}

However, these ideas can be recast differently. Consider a basis element $b_I=b_{i_1}\cdots b_{i_k}$ in $\mathcal X$. We use Proposition \ref{cumformu} to introduce another system of coordinates on $\mathcal G$. Let $\mathcal Y$ be the set of the $c_I$, where, setting $n:=|I|$,
$$
	c_I:=\sum\limits_{k=1}^n(-1)^{k-1}(k-1)!\sum\limits_{P\in P^k_n}b_{P_1,I}\cdots b_{P_k,I}.
$$
As $c_I=b_I+\text{l.o.t.}$, the $c_I$ indeed define another complete system of coordinates on $\mathcal G$. Furthermore, the same argument as for $\mathbb E$ gives for $\alpha \in \mathcal G$ (viewed as a character on $S(\mathcal X)$),
$$
	\log^\star(\alpha)(b_I)=\langle\alpha|c_I\rangle.
$$

As the products $\ast$ and $\star$ are commutative and both agree on $\mathcal G$, we obtain for given $\alpha, \beta \in \mathcal G$:
\begin{align*}	
	\langle\alpha\ast\beta|c_I\rangle
	&=\log^\star(\alpha\ast\beta)(b_I)\\
	&=\log^\star(\alpha)(b_I)+\log^\star(\beta)(b_I)\\
	&=\langle\alpha|c_I\rangle+\langle\beta|c_I\rangle.
\end{align*}
This formula can be interpreted as follows.
As the $\star$ product is dual to the coproduct on  $S(\mathcal X)$, the formula expresses the fact that the $c_I$ are primitive elements in $S(\mathcal X)$.
It follows that, in the system of coordinates $\mathcal Y$, the group $\mathcal G$ obeys the additive group law: the $c_I$-coordinate of $\alpha\ast\beta$ is the sum of the $c_I$-coordinates of $\alpha$ and $\beta$.

Several of the phenomena we have accounted for in the classical commutative case will resurface in the non-commutative context, although some differences will emerge as well, particularly when examining the so-called monotone and anti-monotone cases, which we will introduce in the upcoming section.


\section{Universal products: the non-commutative case}
\label{sect:up}

Consider now a non-commutative probability space  $(\mathcal A,\varphi)$. The distributions of random variables (seen as elements of $\mathcal A$) are then encoded by the linear form (see again \cite{nicaspeicher_06} for details), $\phi: T(\mathcal A)\to \mathbb K,$
$$ 
	a_1\cdots a_n\longmapsto \mathbb 
	\phi(a_1\cdots a_n)=\varphi(a_1\cdot_{\!\!\scriptscriptstyle{\mathcal{A}}}  \cdots \cdot_{\!\!\scriptscriptstyle{\mathcal{A}}}  a_n),
$$
where $a_1\cdots a_n$ stands now for a non-commutative monomial of degree $n$ in the tensor algebra over $\mathcal A$ (once again, not to be confused with the product of the $a_i$ in $\mathcal A$). The distribution of $a\in \mathcal A$ is then obtained as the sequence of its moments $\phi (a^n)$. 

Analogously to the commutative case, the (additive) convolution of two distributions of random variables $X$, $Y$ belonging to $\mathcal A$  could then be defined as the distribution of the sum $X'+Y'$, where $X'$ and $Y'$ are independent, having the same laws as $X$ and $Y$. However, the analogy stops here. Indeed, whereas in the commutative case there is a unique notion of independence, there exist five distinct notions of independence in the non-commutative setting assuming certain axioms to hold: see \cite{muraki_03} and the following developments in this section. The best way to approach this phenomenon in the context of the present article is through the notion of universal product -- devised, among others, to create independent copies of random variables in the various non-commutative probability theories. We shall recall it below, and briefly explain how it gives rise to five distinct group structures that will be studied later on in the article. Note that we omit the discussion of the so-called tensor case which relies on constructions similar to the commutative case.

\smallskip

Let us write $Alg_n$ for the category of non-unital associative algebras (of course, an algebra in $Alg_n$ can have a unit, as $\mathcal A$ does, but its existence is not required for the constructions presented here). Recall first the construction of free products.

\begin{lem}
The coproduct or free product\index{Free product} of algebras, denoted by the symbol $\oast$, in the category $Alg_n$ is obtained as follows: let ${\mathcal A}_1,{\mathcal A}_2$ be two associative algebras, then:
$$
	{\mathcal A}_1\oast {\mathcal A}_2
	:=\bigoplus\limits_{\varepsilon\in Alt}{\mathcal A}_{\varepsilon_1}\otimes 
	\cdots \otimes {\mathcal A}_{\varepsilon_n},
$$
where $Alt$ is the set of sequences $(\varepsilon_1,\dots,\varepsilon_n)\in\{1,2\}^n$ such that $\varepsilon_{i+1}\not=\varepsilon_i$ for $i<n$. The canonical embeddings of ${\mathcal A}_1$ and ${\mathcal A}_2$ in ${\mathcal A}_1\oast {\mathcal A}_2$ are denoted respectively $\iota_1$ and $\iota_2$. An element in a component ${\mathcal A}_{\varepsilon_1}\otimes \dots \otimes {\mathcal A}_{\varepsilon_n}$ will be called an {\it alternating tensor}.
\end{lem}

The product of two tensors $h_1\otimes \cdots \otimes h_n$ and $h_1'\otimes \cdots \otimes h_m'$ in ${\mathcal A}_1 \oast {\mathcal A}_2$ is defined as the concatenation product $h_1\otimes  \cdots \otimes h_n\otimes h_1'\otimes  \cdots \otimes h_m'$, if $h_n$ and $h_1'$ respectively belong to $ {\mathcal A}_1$ and $ {\mathcal A}_2$ or $ {\mathcal A}_2$ and $ {\mathcal A}_1$ and otherwise as:
$h_1\otimes \cdots \otimes (h_n \cdot_{\scriptscriptstyle{\mathcal A}_i} h_1')\otimes \cdots \otimes h_m'$, where $h_n,h_1'\in {\mathcal A}_i$, $i=1,2$. We will use from now on the word notation for tensor products in ${\mathcal A}_1\oast {\mathcal A}_2$: $h_1 \cdots h_n:=h_1\otimes \cdots \otimes h_n$.

The problem addressed by the theory of universal products is the one of extending linear forms $\phi_1$ and $\phi_2$ defined respectively on ${\mathcal A}_1$ and on ${\mathcal A}_2$ to a linear form $\phi_1\phi_2$ on the free product ${\mathcal A}_1 \oast {\mathcal A}_2$. One requires the construction to be associative, natural (that is, functorial in ${\mathcal A}_1$ and $ {\mathcal A}_2$)  and to satisfy the normalization conditions 
\begin{align*}
	(\phi_1\phi_2)\circ \iota_1 
	&=\phi_1\\
 	(\phi_1\phi_2)\circ \iota_2
	&=\phi_2\\
	(\phi_1\phi_2)(\iota_1(a)\iota_2(b))
	&=(\phi_1\phi_2)(\iota_2(b)\iota_1(a))=\phi_1(a)\phi_2(b),
\end{align*}
for $a\in {\mathcal A}_1$ and $ b\in {\mathcal A}_2$. These requirements are respectively axioms $U_2,U_3,U_4$ of Muraki \cite{muraki_03}.

Solutions were classified by Speicher, Sch\"urmann, and Muraki. They take the following five forms \cite[Thm.~2.2]{muraki_03}. In the subsequent definitions, $\phi_1$ and $\phi_2$ are linear forms on ${\mathcal A}_1$ respectively ${\mathcal A}_2$. 

\begin{defn}[Universal Boolean product] 
For an alternating tensor $a_1\cdots a_n$,
$$ 
	\phi_1\bullet\phi_2(a_1\cdots a_n)
	:=\Big(\prod_{a_i\in {\mathcal A}_1}\phi_1(a_i)\Big)
	\Big(\prod_{a_j\in {\mathcal A}_2}\phi_2(a_j)\Big).
$$
\end{defn}

\begin{defn}[Universal free product]
\label{defff} 
For an alternating tensor $a_1\cdots a_n$,
\begin{equation}
\label{eq:defff} 
	\phi_1\ast_f\phi_2(a_1\cdots a_n)
	:=\sum\limits_{I\subsetneq [n]}(-1)^{n-|I|+1}
	\phi_1\ast_f\phi_2\big(\prod_{i\in I}a_i\big)\Big(\prod_{\substack{j\notin I \\ a_j\in {\mathcal A}_1}}\phi_1(a_j)\Big)\Big(\prod_{\substack{k\notin I \\ a_k\in {\mathcal A}_2}}\phi_2(a_k)\Big).
\end{equation}
\end{defn}

In this definition (by induction), it is understood that $\phi_1\ast_f\phi_2(\prod_{i\in \emptyset}a_i):=1$ and that products $\prod_{i\in I}a_i$ are taken in increasing order of the indices (e.g., $\prod_{i\in \{3,5,7\}}a_i=a_3a_5a_7$). This convention for handling products will remain consistent throughout the entire article.

\begin{rmk}\label{closefo}
It is difficult to get an explicit closed formula for the universal free product \eqref{eq:defff}, and we are not aware of such a formula in the literature. However, the following can be said. Recall first that a noncrossing partition of a totally ordered set can always be obtained (although not uniquely) by recursively extracting intervals from the totally ordered set. For example, the noncrossing partition $\{1,7\}\coprod \{2,4,6\}\coprod \{3\}\coprod\{5\}$ can be obtained by first extracting $\{3\}$ (the remaining ordered set is $\{1,2,4,5,6,7\}$), then $\{5\}$  (the remaining ordered set is $\{1,2,4,6,7\}$), then $\{2,4,6\}$ (which is an interval in $\{1,2,4,6,7\}$) to be left with the last block, $\{1,7\}$. Conversely, any partition obtained by the process of recursively extracting intervals is noncrossing. 

Definition \ref{defff} then implies (by an elementary inductive argument based on this characterisation of noncrossing partitions) that there exist universal coefficients $\alpha_\pi$ such that 
$$
	\phi_1\ast_f\phi_2(a_1\cdots a_n)
	=\sum\limits_{\substack{\pi=(\pi_1,\cdots,\pi_k)\in NC_n\\ \pi \leq \{S,S^c\}}}\alpha_\pi\prod\limits_{i=1}^k
	\phi_{1,2}(\prod\limits_{j\in\pi_i}a_j),
$$
where $NC_n$ is the set of noncrossing partitions of $[n]$, $S=\{1,3,5,\ldots\}$ is the set of odd integers in $[n]$,
and where $\phi_{1,2}$ is $\phi_1$ or $\phi_2$ depending on whether the elements $a_j$ for $j\in\pi_i$ belong to $\mathcal A_1$ or $\mathcal A_2$.
Recall that $\pi \leq \{S,S^c\}$ means that $\pi$ is finer than $\{S,S^c\}$; remark that this condition implies that, if $j$ and $j'$ belong to the same block $\pi_i$, then $a_j$ and $a_{j'}$ belong to the same algebra $\mathcal A_1$ or $\mathcal A_2$.  
\end{rmk}

Let now $b_1\cdots b_n$ be a sequence of random variables belonging to $\mathcal A_1$ and $\mathcal A_2$. We do not require anymore the sequence to be alternating, that is: if $b_i\in \mathcal A_{1}$ (resp.~$\mathcal A_2$), then it is not necessarily the case that $b_{i+1}\in \mathcal A_2$ ({resp.}~$\mathcal A_{1}$). We denote by $S$ the set of indices $i$ such that $b_i \in \mathcal A_1$.
We say that a noncrossing partition $\pi \in NC_n$ of $[n]$ is {\it adapted to $S$} if and only if
\[
\pi_{\operatorname{Int}}(S) \leq \pi \leq \{S,[n]-S\},
\]
i.e., there exists a splitting $\pi=\pi^1\cup \pi^2$ of the partition into two subsets of blocks such that
\begin{itemize}
\item $\pi^1$ is a partition of $S$ (and therefore $\pi^2$ a partition of $[n]-S$),
\item given $i,\ 1\leq i<n$, in a block $\pi_j$ of $\pi^p$ (where $p\in\{1,2\}$), then either $i+1\in \pi_j$ or $i+1\in \pi_l$ with $\pi_l\in \pi^{q}$ with $q\not= p$. 
\end{itemize}
The set of noncrossing partitions of $[n]$ adapted to $S$ is denoted $ANC_n^S$ and if such a splitting exists for a given $\pi \in NC_n$, then it is unique and we write $(\pi^1, \pi^2) \in ANC_n^S$. In words, a noncrossing partition $\pi$ is adapted to $S$ if and only if 1) elements in each of its blocks correspond to lower indices of random variables in the same algebra $\mathcal A_1$ or $\mathcal A_2$, and 2) when $a_i$ and $a_{i+1}$ belong to the same algebra, $i$ and $i+1$ belong to the same block. As an example, we consider elements $b_1,b_2,b_3 \in \mathcal A_1$, $a_1,a_2,a_3,a_4 \in \mathcal A_2$, and the word $w=b_1b_2 a_1 a_2  a_3 b_3  a_4$. Then $S=\{1,2,6\}$ and the pairs $\pi_1=\{ \{1,2\},\{6\} \}$, $\pi_2=\{ \{3,4,5\},\{7\} \}$ (see Fig.~\ref{aNC1}) as well as $\sigma_1=\{ \{1,2,6\} \}$, $\sigma_2=\{ \{3,4,5\},\{7\} \}$ (see Fig.~\ref{aNC2}) , and $\tau_1=\{ \{1,2\},\{6\} \}$, $\tau_2=\{ \{3,4,5,7\} \}$ (see Fig.~\ref{aNC3})  would be noncrossing partitions adapted to $S$.

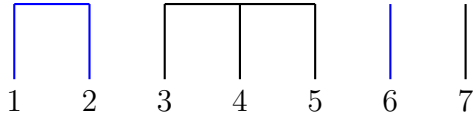
\begin{figure}
\begin{center}
\begin{tikzpicture}
\draw[thick,blue] (0,0) -- (1,0); 
\draw[thick,blue] (0,0) -- (0,-1) node[below,black]  {$1$}; \draw[thick,blue] (1,0) -- (1,-1) node[below,black]  {$2$}; 
\draw[thick,black] (2,0) -- (4,0); 
\draw[thick,black] (2,0) -- (2,-1) node[below,black]  {$3$}; \draw[thick,black] (3,0) -- (3,-1) node[below,black]  {$4$}; \draw[thick,black] (4,0) -- (4,-1) node[below,black]  {$5$}; 
\draw[thick,blue] (5,0) -- (5,-1) node[below,black]  {$6$}; 
\draw[thick,black] (6,0) -- (6,-1) node[below,black]  {$7$};  
\end{tikzpicture}
\caption{Adapted noncrossing partition $\pi_1 \cup \pi_2$.}\label{aNC1}
\end{center}
\end{figure}

\begin{figure}
\begin{center}
\begin{tikzpicture}
\draw[thick,blue] (0,0) -- (5,0); 
\draw[thick,blue] (0,0) -- (0,-1) node[below,black]  {$1$}; \draw[thick,blue] (1,0) -- (1,-1) node[below,black]  {$2$}; 
\draw[thick,black] (2,-0.5) -- (4,-0.5); 
\draw[thick,black] (2,-0.5) -- (2,-1) node[below,black]  {$3$}; \draw[thick,black] (3,-0.5) -- (3,-1) node[below,black]  {$4$}; \draw[thick,black] (4,-0.5) -- (4,-1) node[below,black]  {$5$}; 
\draw[thick,blue] (5,0) -- (5,-1) node[below,black]  {$6$}; 
\draw[thick,black] (6,0) -- (6,-1) node[below,black]  {$7$};  
\end{tikzpicture}
\caption{Adapted noncrossing partition $\sigma_1 \cup \sigma_2$.}\label{aNC2}
\end{center}
\end{figure}
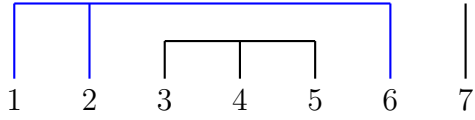

\begin{figure}
\begin{center}
\begin{tikzpicture}
\draw[thick,blue] (0,0) -- (1,0); 
\draw[thick,blue] (0,0) -- (0,-1) node[below,black]  {$1$}; \draw[thick,blue] (1,0) -- (1,-1) node[below,black]  {$2$}; 
\draw[thick,black] (2,0) -- (6,0); 
\draw[thick,black] (2,0) -- (2,-1) node[below,black]  {$3$}; \draw[thick,black] (3,0) -- (3,-1) node[below,black]  {$4$}; \draw[thick,black] (4,0) -- (4,-1) node[below,black]  {$5$}; 
\draw[thick,blue] (5,-0.5) -- (5,-1) node[below,black]  {$6$}; 
\draw[thick,black] (6,0) -- (6,-1) node[below,black]  {$7$};  
\end{tikzpicture}
\caption{Adapted noncrossing partition $\tau_1 \cup \tau_2$.}\label{aNC3}
\end{center}
\end{figure}
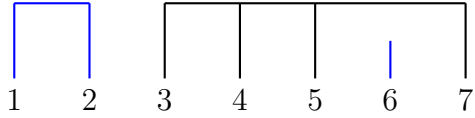

We get:

\begin{lem}
There exist (universal) coefficients $\alpha_{\pi^1,\pi^2}$ such that for any sequence $b_1\cdots b_n$ of random variables in $\mathcal A_1$ and $\mathcal A_2$
\begin{equation}
\label{adaptedfree}
	\phi_1\ast_f\phi_2(b_1\cdots b_n)
	=\sum\limits_{(\pi^1,\pi^2)\in ANC_n^S}	
	\alpha_{\pi^1,\pi^2}\Big(\prod_{\pi_j\in\pi^1}\phi_1(b_{\pi_j})\Big)
	\Big(\prod_{\pi_k\in\pi^2}\phi_2(b_{\pi_k})\Big).
\end{equation}
\end{lem}
The lemma follows by grouping in the word $b_1\cdots b_n$ the consecutive letters that belong to the same algebra to form an alternating tensor to which Remark \ref{closefo} can be applied.
Notice that, given any $(\pi^1,\pi^2)\in ANC^S_n$, one can recover the index set as $S=\bigcup_{\pi_j \in \pi^1} \pi_j$.
Such properties of universal free products and reasoning on them can be found already in Speicher \cite{speicher_90}. Details of the proof are left to the reader.

Let us point out for later use that when $\pi^1$ and $\pi^2$ contain each a single block, the coefficient $\alpha_{\pi^1,\pi^2}$ is obtained directly from the universal product formula in Definition \ref{defff} and is equal to one.
More precisely, $\pi^1,\pi^2$ can only both contain a single block in the cases that $b_1\cdots b_n = c_1\cdots c_id_{i+1}\cdots d_n$ with $c_k \in \mathcal A_p$ and $d_k\in \mathcal A_q$ (which corresponds to $n=2$, $a_1=c_1\cdots c_i,\ a_2=d_{i+1}\cdots d_n$ in Definition~\ref{defff})
or $b_1\cdots b_n = c_1\cdots c_id_{i+1}\cdots d_j e_{j+1}\cdots e_{n}$ with $c_k,e_k \in \mathcal A_p$ and $d_k\in \mathcal A_q$ (which corresponds to $n=3$, $a_1=c_1\cdots c_i,\ a_2=d_{i+1}\cdots d_j,\ a_3= e_{j+1}\cdots e_{n}$ in Definition~\ref{defff}),
where $p\not= q$.
We apply Definition~\ref{defff} in the first case with $I=\emptyset,\{1\},\{2\}$ and in the second case with $I=\{1,3\}$.

\begin{defn}[Universal monotone product]
\label{def:monounivprod}
For an alternating tensor $a_1\cdots a_n$,
$$
	\phi_1\blacktriangleright\phi_2(a_1\cdots a_n)
	:=\phi_1\big(\prod_{a_i\in {\mathcal A}_1}a_i\big) \prod_{a_j\in {\mathcal A}_2}\phi_2(a_j).
$$
\end{defn}

\begin{defn}[Universal anti-monotone product] For an alternating tensor $a_1\cdots a_n$,
$$
	\phi_1\blacktriangleleft\phi_2(a_1\cdots a_n)
	:=\Big(\prod_{a_i\in {\mathcal A}_1}\phi_1(a_i)\Big)\phi_2\big(\prod_{a_j\in {\mathcal A}_2}a_j\big).
$$
\end{defn}

As already mentioned, we omit the tensor case, which would be given by the product
$$
	\phi_1(\prod_{a_i\in {\mathcal A}_1}a_i)\phi_2(\prod_{a_j\in {\mathcal A}_2}a_j).
$$

In each of the above cases, if $X,Y$ are two random variables in $(\mathcal A,\varphi)$, we let $(\mathcal A_1,\varphi_1)$ and $(\mathcal A_2,\varphi_2)$ be two copies of $(\mathcal A,\varphi)$ embedded as usual in $\mathcal A_1\oast\mathcal A_2$ and $X_1,Y_2$ copies of $X$ and $Y$ respectively in $\mathcal A_1$ and $\mathcal A_2$. We set $X':=\iota_1(X_1)\in\mathcal A_1\oast\mathcal A_2$ and $Y'':=\iota_2(Y_2)\in \mathcal A_1\oast\mathcal A_2$. The two variables $X'$ and $Y''$ are independent for the five notions of independence and the respective products of states. The key difference between the five cases is the way the state map $\varphi$ is extended to $\mathcal A_1\oast\mathcal A_2$. For example, in the monotone case, $X'$ and $Y''$ are monotone independent for the state map $\phi_1 \blacktriangleright \phi_2$. 

For each of the mentioned probability theories, the (additive) convolution of two distributions of variables $X$, $Y$ belonging to $(\mathcal A,\varphi)$  is then defined as the distribution of the sum $X'+Y''$. For example, the moment of order $n$ associated to the Boolean, respectively monotone convolution of the distributions of $X$ and $Y$ is obtained by computing $\phi_1\bullet\phi_2((X'+Y'')^n)$, respectively $\phi_1\blacktriangleright\phi_2((X'+Y'')^n)$. In the first case (using notation from Section \ref{sec:notation}) we get:
$$
	\phi_1\bullet\phi_2((X'+Y'')^n)
	=\sum\limits_{S\subset [n]}\phi(X^{|S_1|})\cdots 
	\phi(X^{|S_l|})\phi(Y^{|J_1^S|})\cdots \phi(Y^{|J_k^S|}).
$$
This expression is symmetric (and equal to $\sum_{S\subset [n]}\phi(Y^{|S_1|})\cdots \phi(Y^{|S_l|})\phi(X^{|J_1^S|})\cdots \phi(X^{|J_k^S|})$): Boolean convolution is commutative.

In the second case:
$$
	\phi_1\blacktriangleright\phi_2((X'+Y'')^n)
	=\sum\limits_{S\subset [n]}\phi(X^{|S|})\phi(Y^{|J_1^S|})\cdots \phi(Y^{|J_k^S|}).
$$
Notice that, since the monotone and anti-monotone products are non-symmetric, the order matters and the convolution of the distributions of $X$ and $Y$ is different in general from the convolution of the distributions of $Y$ and $X$. 

We are now in the position to develop the three group structures corresponding to free, Boolean and monotone probabilities, by analogy with the commutative case. As the anti-monotone product is simply obtained by exchanging the roles of ${\mathcal A}_1$ and ${\mathcal A}_2$ in the free product of algebras, its treatment is omitted here. We recall first the framework that permits to obtain these products as the products arising from a group of characters on a (not necessarily commutative or cocommutative) Hopf algebra. In the following sections, this approach is linked to the one derived from universal products.


\section{Products of states}
\label{sect:GS}

Hereafter, as was done in the classical case, we extend the notion of \textit{state} (which we define here as a linear unital map on $\mathcal A$), to the notion of \textit{generalized measure}, i.e., an arbitrary unital linear form on the tensor algebra $T(\mathcal A)$.
Remark that our notion of state is sometimes called a \textit{formal state} (with the term state reserved for positive linear unital maps, which requires the assumption that $\mathcal{A}$ is a $*$-algebra that we do not assume here).

The generalized measure associated to a state $\varphi$ is the unital linear form, denoted $\phi: T(\mathcal A) \to \mathbb K$:
$$
	a_1\cdots a_n\longmapsto \phi(a_1\cdots a_n)
	=\varphi(a_1\cdot_{\!\!\scriptscriptstyle{\mathcal{A}}} \cdots \cdot_{\!\!\scriptscriptstyle{\mathcal{A}}}a_n).
$$
A generalized measure $\psi$ extends uniquely to a character $\Psi$ on the double tensor algebra $T(T^+(\mathcal A))$ by requiring that 
$$
	\Psi([w_1|\cdots |w_n]):=\psi(w_1)\cdots \psi(w_n).
$$
As $\psi$ is the restriction of $\Psi$ to $T^+(\mathcal A)$, this correspondence defines a bijection between generalized measures and characters on $T(T^+(\mathcal A))$. We denote $\mathcal G(\mathcal A)$ and $\mathcal G_T(\mathcal A)$ the set of generalized measures on $\mathcal A$ respectively the set of characters on $T(T^+(\mathcal A))$ and write
$$
	\Psi =: char(\psi) \quad \text{and} \quad \psi =:gm(\Psi)
$$
for the bijection from $\mathcal G(\mathcal A)$ to $\mathcal G_T(\mathcal A)$ and its inverse. 

Recall now the definition of a particular Hopf algebra structure on the double tensor algebra $T(T^+(\mathcal A))$.

\begin{defn}
The map $\Delta : T^+(\mathcal A) \to (\mathbb{K}\oplus T^+(\mathcal A)) \otimes   T(T^+(\mathcal A))$ is defined by
\begin{equation}
\label{HopfAlg}
	\Delta(a_1\cdots a_n) 
	:= \sum_{S \subseteq [n]} a_S \otimes  [a_{J_1^S} | \cdots | a_{J_k^S}]
	=\sum_{S \subseteq [n]} a_S \otimes [a_{J^S_{[n]}}],
\end{equation} 
with $\Delta(\un):= \un \otimes \un$ and $[a_{J^S_{[n]}}]:=[a_{J_1^S} | \cdots | a_{J_k^S}]$. It is then extended multiplicatively to a coproduct on $T(T^+(\mathcal A))$
$$
	\Delta([w_1 | \cdots | w_m]) := \Delta(w_1) \cdots \Delta(w_m).
$$
\end{defn}
 
\begin{thm}[\cite{ebrahimipatras_15}]
The graded algebra $T(T^+(\mathcal A))$ equipped with the coproduct \eqref{HopfAlg} is a graded connected non-commutative and non-cocommutative Hopf algebra. 
\end{thm}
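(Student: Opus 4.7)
My plan is to verify the Hopf axioms in the usual order: algebra compatibility, counit, coassociativity, grading and connectedness, antipode, and finally the (co)commutativity claims.

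Bialgebra compatibility $\Delta(x \top y) = \Delta(x)\cdot \Delta(y)$ holds tautologically, since Definition~\ref{def:coproduct} extends $\Delta$ from $T^+(\mathcal A)$ multiplicatively via $\Delta([w_1|\cdots|w_m]) := \Delta(w_1)\cdots \Delta(w_m)$; here the natural embedding $T(\mathcal A) \hookrightarrow T(T^+(\mathcal A))$ sending $a_1\cdots a_n \mapsto [a_1\cdots a_n]$ lets us regard $\Delta$ as landing in $T(T^+(\mathcal A))^{\otimes 2}$. It therefore suffices to check the remaining axioms on a single generator $a_1\cdots a_n \in T^+(\mathcal A)$. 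For the counit I would set $\epsilon(\un):=1$ and $\epsilon([w_1|\cdots|w_m]):=0$ for $m\geq 1$: in the expansion \eqref{HopfAlg}, applying $\epsilon\otimes \id$ keeps only $S=\emptyset$ (yielding $[a_1\cdots a_n]$) and applying $\id\otimes \epsilon$ keeps only $S=[n]$ (yielding $a_1\cdots a_n$).

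The main step is coassociativity. Iterating \eqref{HopfAlg}, I would write on the one hand
\begin{equation*}
(\Delta\otimes \id)\Delta(a_1\cdots a_n) = \sum_{T\subseteq S\subseteq [n]} a_T \otimes [a_{K^{T,S}}] \otimes [a_{J^S_{[n]}}],
\end{equation*}
where $K^{T,S}$ denotes the ordered list of connected components of $S\setminus T$ within the induced order on $S$; and on the other hand, using multiplicativity of $\Delta$ on the $\top$-product,
\begin{equation*}
(\id\otimes \Delta)\Delta(a_1\cdots a_n) = \sum_{S\subseteq [n]} \sum_{\substack{R_i \subseteq J^S_i \\ i=1,\ldots,k}} a_S \otimes [a_{R_1} | \cdots | a_{R_k}] \otimes [a_{L^{R_\bullet,S}}],
\end{equation*}
where $L^{R_\bullet,S}$ encodes the connected components of each $J^S_i\setminus R_i$ inside $J^S_i$. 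These two indexings are then identified by the bijection
\[T\subseteq S\subseteq [n]\;\longleftrightarrow\;(U,V):=(T,\,S\setminus T),\]
between nested pairs and pairs of disjoint subsets of $[n]$. The leftmost and rightmost tensor factors match on the nose ($a_T=a_U$, and both rightmost factors encode the connected components of $[n]\setminus(U\cup V)=[n]\setminus S$). The combinatorial crux for the middle factor is that two elements of $V$ are consecutive in the induced order on $S=U\cup V$ if and only if no element of $U$ lies strictly between them in $[n]$, if and only if they lie in the same connected component of $[n]\setminus U$; this identifies the non-empty blocks of $K^{T,S}$ with the non-empty $R_i=V\cap J^U_i$ (empty $R_i$ contribute only units that are absorbed into the $\top$-product). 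I expect this matching of three different notions of connected component to be the main obstacle in the proof.

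With coassociativity in hand, the rest is quick: $\Delta$ preserves the grading $\deg([w_1|\cdots|w_m]):=\sum_i\deg(w_i)$ because $|S|+|[n]\setminus S|=n$; the degree-zero subspace is $\mathbb K\cdot\un$, so $T(T^+(\mathcal A))$ is a graded connected bialgebra, and the antipode exists and is unique by the standard recursion for such bialgebras. Non-commutativity is immediate from the $\top$-concatenation product. Non-cocommutativity is witnessed already in degree three: the summand $S=\{2\}$ contributes $[a_2]\otimes[a_1 | a_3]$ to $\Delta(a_1a_2a_3)$, yet its flip $[a_1 | a_3]\otimes[a_2]$ cannot appear, since the left tensor factor of $\Delta$ applied to any word in $T^+(\mathcal A)$ lies in the single-bar image of $T(\mathcal A)\hookrightarrow T(T^+(\mathcal A))$.
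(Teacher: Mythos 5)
Your proof is correct, and it is essentially the standard argument: the paper itself does not prove Theorem~\ref{thm:HA} but quotes it from \cite{ebrahimipatras_15}, where the verification proceeds exactly along your lines (multiplicative extension making bialgebra compatibility automatic, coassociativity via the identification of nested pairs $T\subseteq S$ with pairs of disjoint subsets and a matching of connected components, then the graded-connected antipode argument). One small imprecision worth fixing: your claim that two elements of $V$ are ``consecutive in the induced order on $S=U\cup V$ if and only if no element of $U$ lies strictly between them'' is literally false for non-adjacent elements of $V$ (an intervening element of $V$ also breaks consecutivity); the statement should be made for elements adjacent in $V$, or directly at the level of blocks, after which your identification of the blocks of $K^{T,S}$ with the non-empty $R_i=V\cap J^U_i$ goes through as written.
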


A crucial observation is that the coproduct \eqref{HopfAlg} can be split into two parts as follows. On $T^+(\mathcal A)$ define the {\it{left half-coproduct}} by
\begin{equation*}
	\Delta^+_{\prec}(a_1 \cdots a_n) 
	:= \sum_{1 \in S \subseteq [n]} a_S \otimes [a_{J^S_{[n]}}],
\end{equation*}
and
\begin{equation*}
	\Delta_{\prec}(a_1 \cdots a_n) 
	:= \Delta^+_{\prec}(a_1 \cdots a_n) - a_1 \cdots a_n \otimes \un. 
\end{equation*}
The {\it{right half-coproduct}} is defined by
\begin{equation*}
	\Delta^+_{\succ}(a_1 \cdots a_n) 
	:= \sum_{1 \notin S \subset [n]} a_S \otimes [a_{J^S_{[n]}}],
\end{equation*}
and
\begin{equation*}
	\Delta_{\succ}(a_1 \cdots a_n) 
	:= \Delta^+_{\succ}(a_1 \cdots a_n) -  \un \otimes a_1 \cdots a_n.
\end{equation*}
Which yields $\Delta = \Delta^+_{\prec} + \Delta^+_{\succ}$, and $\Delta(w) = \Delta_{\prec}(w) + \Delta_{\succ}(w) + w \otimes \un + \un \otimes w.$ This is extended to $T^+(T^+(\mathcal A))$ by defining
\begin{align*}
	\Delta^+_{\prec}([w_1 | \cdots | w_m]) 
	&:= \Delta^+_{\prec}(w_1)\Delta(w_2) \cdots \Delta(w_m) \\
	\Delta^+_{\succ}([w_1 | \cdots | w_m]) 
	&:= \Delta^+_{\succ}(w_1)\Delta(w_2) \cdots \Delta(w_m). 
\end{align*}

By duality, the dual vector space $T(T^+(\mathcal A))^\ast$ of linear forms on $T(T^+(\mathcal A))$ is an algebra with respect to the convolution product defined for $f,g \in T(T^+(\mathcal A))^\ast$ by
\begin{equation}
\label{convProd}
	f * g := (f\otimes g)\circ \Delta,
\end{equation} 
where we implicitly identify $\mathbb K\otimes\mathbb K$ with $\mathbb K$. The unit for the product \eqref{convProd} is the canonical projection, denoted $\nu$ hereafter, from $T(T^+(\mathcal A))$ to $\mathbb K$ (sending both non-empty words $a_1\cdots a_n$ and elements $[w_1|\cdots|w_k]$, $k>1$, in $T(T^+(\mathcal A))$ to zero). We summarize the group-theoretical implications of these ideas in the

\begin{prop}
Characters in $\mathcal G_T(\mathcal A)$ form a group under convolution \eqref{convProd}. The corresponding Lie algebra $g_T({\mathcal A})$ of infinitesimal characters is characterised by $\alpha(\mathbf 1) =0$ and $\alpha([w_1|\cdots |w_n])=0$, for $\alpha \in g_T({\mathcal A})$ and $w_1,\ldots,w_n \in T^+(A)$, $n \geq 2$.  $\mathcal G_T(\mathcal A)$ and $g_T({\mathcal A})$ are related bijectively by the convolution exponential and logarithm, $\exp^*$ respectively $\log^*$.  \end{prop}

The {\it{left}} and {\it{right half-convolution}} or {\it{half-shuffle}} products on $T^+(T^+(\mathcal A))^\ast$ are obtained by dualizing the half-coproducts:
\begin{equation}
\label{halfshuffle}
	f\prec g:=(f\otimes g)\circ \Delta_\prec 
	\quad\
	{\rm{and}}
	\quad\
	f\succ g:=(f\otimes g)\circ \Delta_\succ,
\end{equation} 
which split the associative convolution product  \eqref{convProd}
\begin{equation*}
	f * g = f \succ g + f \prec g. 
\end{equation*} 	
The two operations in \eqref{halfshuffle} are not associative. Indeed, they satisfy the so-called {\it{half-shuffle identities}} defining a shuffle algebra:
\begin{eqnarray}
	(f \prec g)\prec h &=& f \prec(g * h)        		\label{A1}\\
  	(f \succ g)\prec h &=& f \succ(g\prec h)   		\label{A2}\\
   	f \succ(g\succ h)   &=& (f  * g)\succ h.	      	\label{A3}
\end{eqnarray}
Relations \eqref{A1}-\eqref{A3} are extended using Sch\"utzenberger's trick, that is, by setting $\nu\succ f:=f$, $f \prec \nu:=f$, $\nu \prec f:=0$ and $f\succ \nu:=0$ for $f \in T^+(T^+(\mathcal A))^\ast$ ($\nu\prec \nu$ and $\nu\succ \nu$ are left undefined, whereas $\nu\ast \nu=\nu$). 

Recall finally (e.g.~from \cite{ebrahimipatras_17}) the following proposition. A  (left) pre-Lie algebra \cite[Chap.~6]{cp2021} is a vector space $V$ equipped with a bilinear product $\shortmid\!\!\rhd$ such that the (left) pre-Lie identity 
$$
	(x \!\shortmid\!\!\rhd y)\!\shortmid\!\!\rhd z
	-x \!\shortmid\!\!\rhd (y \!\shortmid\!\!\rhd z)
	=(y \!\shortmid\!\!\rhd x)\!\shortmid\!\!\rhd z
	-y \!\shortmid\!\!\rhd (x\!\shortmid\!\!\rhd z)
$$
is satisfied. The notion of right pre-Lie algebra is defined analogously. The bracket $[x,y]:=x\!\shortmid\!\!\rhd y - y\!\shortmid\!\!\rhd x$ satisfies the Jacobi identity and defines a Lie algebra structure on $V$. The next statement is a consequence of the fact that any shuffle algebra carries a (left) pre-Lie product defined by antisymmetrization of the half-shuffle products.

\begin{prop} \label{prelieprop}
The space $T^+(T^+(\mathcal A))^\ast$ is equipped with a (left) pre-Lie algebra structure defined by the product
\begin{equation*}
	f \!\shortmid\!\!\rhd g := f \succ g  - g \prec f
\end{equation*}
and the associated Lie bracket $[f,g] = 	f \!\shortmid\!\!\rhd g  - 	g \!\shortmid\!\!\rhd f$ is equal to $f*g-g*f$.
\end{prop}

For $\alpha \in g_T({\mathcal A})$ the {\it{left}} and {\it{right half-shuffle}}, or ``time-ordered'', exponentials are defined by (see \cite{ebrahimipatras_15} for details)
\begin{equation*}
	\mathcal{E}_\prec(\alpha) := \exp^{\prec}(\alpha) :=\nu + \sum_{n > 0} \alpha^{\prec{n}} 
\end{equation*}
\begin{equation*}
	\mathcal{E}_\succ(\alpha) := \exp^{\succ}(\alpha):=\nu + \sum_{n > 0}  \alpha^{\succ n}, 
\end{equation*}
where $\alpha^{\prec{n}} := \alpha \prec(\alpha^{\prec{n-1}})$, $\alpha^{\prec{0}}:=\nu$ and $\alpha^{\succ{n}} := (\alpha^{\succ{n-1}})\succ \alpha$, $\alpha^{\succ{0}}:=\nu$. They satisfy by definition the fixed point equations
\begin{equation}
\label{recursion}
	\mathcal{E}_\prec(\alpha)=\nu + \alpha \prec \mathcal{E}_\prec(\alpha) 
	\qquad 
	\mathcal{E}_\succ(\alpha)= \nu + \mathcal{E}_\succ(\alpha) \succ \alpha,
\end{equation}
and provide bijections between the Lie algebra $g_T({\mathcal A})$ and the group of characters $\mathcal G_T(\mathcal A)$. Its inverses, known as {\it{left}} and {\it{right half-shuffle}} logarithms, $\mathcal{L}_\prec$ respectively $\mathcal{L}_\succ$, can be deduced from the fixed point equations \eqref{recursion} using the half-shuffle identities \eqref{A1}-\eqref{A3}:
\begin{equation}
\label{halfshufflelog}
	\mathcal{L}_\prec(\Phi)=(\Phi - \nu) \prec \Phi^{-1} 
	\quad \text{and} \quad
	\mathcal{L}_\succ(\Phi)=\Phi^{-1} \succ (\Phi - \nu).
\end{equation}

With this structure in place, the shuffle algebra approach \cite{ebrahimipatras_19} permits to express moment-cumulant relations in non-commutative probability in terms of the convolution or shuffle exponential and logarithm, $\exp^*$ respectively $\log^*$, and the half-shuffle exponentials and logarithms defined in equations \eqref{recursion} and \eqref{halfshufflelog}. In addition, the unified setting covers relations between the different families of cumulants \cite{Arizmendi_15} in terms of a single fundamental identity in the group $\mathcal G_T(\mathcal A)$
\begin{equation*}
	\Phi=\exp^*(\rho)=\mathcal{E}_\prec(\kappa) = \mathcal{E}_\succ(\beta),
\end{equation*}
and the corresponding infinitesimal relation in $g_T({\mathcal A})$ \cite{CelEbraPatPer2022,ebrahimipatras_17}. Here $\Phi \in \mathcal G_T(\mathcal A)$ and $\rho$, $\kappa$, and $\beta$ are infinitesimal characters in $g_T({\mathcal A})$, which correspond respectively to monotone, free and boolean cumulants.


\section{The monotone group of states}
\label{sec:monotonestate}

The previous two sections lead to the following definitions. We refer the reader to reference \cite{ebrahimipatras_15} for details (in particular the proof that the following constructions define groups -- the next one is actually a rephrasing in terms of generalized measures of definitions given in \cite{ebrahimipatras_15} and other articles following the same approach).

Recall from Section~\ref{sect:GS} that $\mathcal G(\mathcal A)$ denotes the set of generalized measures on $\mathcal{A}$ and that there is a bijection $gm\colon \mathcal{G}_T(\mathcal{A})\to \mathcal{G}(\mathcal{A})$, where $ \mathcal{G}_T(\mathcal{A})$
is the set of characters on $T(T^+(\mathcal A))$.

\begin{defn}[Monotone group of states, algebraic definition]\label{demonogr}
The set $\mathcal G(\mathcal A)$ equipped with the product
$$
	\phi\ast \psi:=gm(char(\phi)\ast char(\psi))
$$
is a group called the {\it monotone group of states}.
\end{defn}

In the next proposition, the notation of Section \ref{sect:up} is used. We write $\mathcal B_1$ and $\mathcal B_2$ for two copies of the tensor algebra $T(\mathcal A)$ and $\phi_1$, $\psi_2$ for two copies of elements of $\phi,\psi \in \mathcal G(\mathcal A)$ acting respectively on $\mathcal B_1$ and $\mathcal B_2$. Given an element $a$ in $\mathcal A$, we write $a'$ and $a^{\prime\prime}$ for copies of $a$ in $\mathcal B_1$ respectively $\mathcal B_2$, and use the canonical embeddings $\iota_1$ and $\iota_2$ to view them as elements of the product $\mathcal B_1\oast\mathcal B_2$.

\begin{prop}[Monotone group of states, probabilistic definition]
The set $\mathcal G(\mathcal A)$ of generalized measures equipped with the product
\begin{equation}\label{mogrsta}
	\phi\ast \psi(a_1\cdots a_n)
	:=\phi_1\blacktriangleright \psi_2((a_1'+a_1^{\prime\prime})\cdots (a_n'+a_n^{\prime\prime})), 
\end{equation}
is a group that identifies with the monotone group of states of Definition \ref{demonogr}.
\end{prop}

\begin{proof}
The identity between the two definitions follows from the definition of the coproduct $\Delta$ on $T(T^+(\mathcal A))$ and the universal monotone product $\blacktriangleright$ given in Definition \ref{def:monounivprod}. The definition of $\Delta$ indeed immediately leads, for the convolution product defined by Equation \eqref{convProd} to the identity
$$
	\phi\ast \psi(a_1\cdots a_n)
	=\sum\limits_{S\subset [n]}\phi(a_S)\psi(a_{J_1^S}) \cdots \psi(a_{J_k^S}).
$$
In the second case (identity \eqref{mogrsta}), the same  identity follows by an elementary enumerative combinatorics argument  that we omit -- it amounts to parametrizing by subsets of $[n]$ the products of $a_i'$s and $a_j^{\prime\prime}$s appearing in the expansion of $(a_1'+a_1^{\prime\prime})\cdots (a_n'+a_n^{\prime\prime})$.
\end{proof}

Let us redo now the analysis of the classical case in a non-commutative setting. Assume that $\mathcal A$ has a basis $\mathcal B=(b_i)_{i\in\NN}$. Regarding this assumption, recall that a wide class of non-commutative probability spaces are subalgebras of algebras of operators on a Hilbert space and have a dense subspace spanned by a Hilbert space basis.

Given a sequence $I=(i_1,\ldots,i_k)\in{\mathbb N}^k$, we set $b_I:=b_{i_1}\cdots b_{i_k}$.
Notice that $b_I$ is now a word in the $b_i$ (and not anymore a commutative monomial as in the classical case). The set $\mathcal Y$ of words $b_I$ defines then a (complete) system of coordinates on $\mathcal G(\mathcal A)$, that is, for $\phi\in\mathcal G(\mathcal A)$,
$$
	\langle\phi |b_I\rangle:= \phi(b_I).
$$
The definition of the group law on $\mathcal G(\mathcal A)$ implies that the $b_I$ are representative functions on the group \cite[2.7 Representative functions]{cp2021}. We get the alternative presentation of $\mathcal G(\mathcal A)$ as the group of characters of its Hopf algebra of representative functions generated by the elements of $\mathcal Y$:

\begin{cor}
The monotone group of states is canonically isomorphic to the group, denoted $\mathcal G_S(\mathcal A)$ of characters of the Hopf algebra $S(T^+(\mathcal A))$ equipped with the product of polynomials over $T^+(\mathcal A)$ and the coproduct whose restriction to the generators of the polynomial algebra $S(T^+(\mathcal A))$ is given by
$$
	\Delta_m \colon T^+(\mathcal{A}) \to (\mathbb{K} \oplus T^+(\mathcal{A})) \otimes  S(T^+(\mathcal{A}))
$$
\begin{equation}
\label{def:coproducts1}
	\Delta_m(a_1 \cdots a_n) 
	:= \sum_{S \subseteq [n]} a_S \otimes  (a_{J^S_1} | \cdots | a_{J^S_k}).
\end{equation} 
We call $\Delta_m$ the monotone coproduct. 
\end{cor}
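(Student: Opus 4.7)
The plan is to realize $(S(T^+(\mathcal{A})),|,\Delta_m)$ as a quotient Hopf algebra of $(T(T^+(\mathcal{A})),\top,\Delta)$ from Theorem~\ref{thm:HA} by the two-sided ideal of $\top$-commutators, and then to invoke the observation that characters, being $\mathbb{K}$-valued, are insensitive to the non-commutativity of $\top$ and therefore factor through this quotient.

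First I would introduce the canonical surjection $\pi:T(T^+(\mathcal{A}))\twoheadrightarrow S(T^+(\mathcal{A}))$ sending a bracketed element $[w_1|\cdots|w_m]$ to the corresponding symmetric tensor $w_1|\cdots|w_m$. This is a surjective algebra morphism carrying $\top$ to the commutative bar-product, and its kernel $\mathcal{I}$ is generated as a two-sided ideal by the commutators $[w_1|w_2]-[w_2|w_1]$ with $w_i\in T^+(\mathcal{A})$. I would then verify that $\mathcal{I}$ is a coideal: the counit $\nu$ vanishes on $\mathcal{I}$ by direct inspection, and since $\Delta$ is multiplicative for $\top$, we have $\Delta([w_1|w_2])=\Delta(w_1)\,\top\,\Delta(w_2)$; after applying $\pi\otimes\pi$ this image becomes symmetric in $(w_1,w_2)$ automatically because it lies in the commutative algebra $S(T^+(\mathcal{A}))\otimes S(T^+(\mathcal{A}))$. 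Hence $\pi$ descends to a Hopf algebra morphism onto $S(T^+(\mathcal{A}))$, and evaluating the induced coproduct on a generator $a_1\cdots a_n\in T^+(\mathcal{A})$ via~\eqref{HopfAlg} reproduces exactly the displayed formula for $\Delta_m$.

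The second step is to match the character groups. Any $\Phi\in\mathcal{G}_T(\mathcal{A})$ satisfies $\Phi([w_1|w_2])=\Phi(w_1)\Phi(w_2)=\Phi([w_2|w_1])$ by commutativity of $\mathbb{K}$, so $\Phi$ vanishes on $\mathcal{I}$ and factors uniquely through $\pi$ as a character on $S(T^+(\mathcal{A}))$; pullback along $\pi$ provides the inverse. Because $\pi$ is a coalgebra morphism, this bijection preserves convolution and yields a group isomorphism $\mathcal{G}_T(\mathcal{A})\cong\mathcal{G}_S(\mathcal{A})$. Composing with the bijection $gm:\mathcal{G}_T(\mathcal{A})\to\mathcal{G}(\mathcal{A})$ then delivers the canonical isomorphism between the monotone group of states and $\mathcal{G}_S(\mathcal{A})$.

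The only delicate point is the coideal verification, but it reduces, as noted, to the observation that the image of a $\top$-multiplicative coproduct under $\pi\otimes\pi$ automatically lands in a commutative tensor algebra; no combinatorial case analysis on the shape of subsets $S\subseteq[n]$ is needed.
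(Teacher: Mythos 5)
Your proof is correct, but it follows a genuinely different route from the paper's. You realize $(S(T^+(\mathcal A)),\Delta_m)$ as the abelianization of the Hopf algebra of Theorem~\ref{thm:HA}: the canonical projection $\pi$ kills the $\top$-commutators, the composite $(\pi\otimes\pi)\circ\Delta$ is an algebra map into the commutative algebra $S(T^+(\mathcal A))\otimes S(T^+(\mathcal A))$ and hence vanishes on the commutator ideal, so the quotient carries the induced coproduct, which on generators is exactly \eqref{def:coproducts1}; then characters, being $\mathbb K$-valued, cannot detect the noncommutativity of $\top$, so they factor bijectively through $\pi$, and since $\pi$ is a morphism of bialgebras the bijection $\mathcal G_T(\mathcal A)\cong\mathcal G_S(\mathcal A)$ respects convolution; composing with $gm$ gives the corollary. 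The paper instead argues in the language it is advocating: it uses the explicit convolution formula $\phi\ast\psi(a_1\cdots a_n)=\sum_{S}\phi(a_S)\psi(a_{J_1^S})\cdots\psi(a_{J_k^S})$ (established in the proof identifying the two definitions of the monotone product) to see that the coordinate functions $b_I$ are representative functions on $\mathcal G(\mathcal A)$, and then invokes the general theory of representative functions and pro-affine algebraic groups to present the group as the character group of $\mathbb K[\mathcal Y]\cong S(T^+(\mathcal A))$ with the coproduct dual to the group law, namely $\Delta_m$. Your argument is basis-free, does not appeal to that general theory, and has the added benefit of actually proving the Hopf-algebra property of $(S(T^+(\mathcal A)),\Delta_m)$ (as a quotient of $T(T^+(\mathcal A))$), a point the paper only remarks can be checked by hand or deduced from general principles; the paper's route, on the other hand, keeps the algebraic-group/coordinate viewpoint in the foreground and exhibits $\Delta_m$ directly as the dual of the group law. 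Two small points you may wish to make explicit: coassociativity and multiplicativity of the induced coproduct follow from surjectivity of $\pi$, and the antipode of the quotient exists automatically because it is a graded connected bialgebra.
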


For example, let $a_1a_2a_3 \in T^+(\mathcal{A})$ and let us calculate the reduced monotone coproduct (that is, $\overline\Delta_m(x):=\Delta_m(x)-x\otimes \un-\un\otimes x$):
\begin{align*}
	\overline\Delta_m(a_1 a_2 a_3) 
	&=  a_2 a_3 \otimes a_1 + a_1 a_2 \otimes a_3 + a_1 a_3 \otimes a_2\\
	&\quad + a_1  \otimes a_2a_3 + a_2  \otimes (a_1 | a_3) +  a_3  \otimes a_1 a_2.  
\end{align*}

Notice the fact that $S(T^+(\mathcal A))$ is indeed a Hopf algebra (that is, the coproduct is coassociative as well as a map of associative unital algebras and $S(T^+(\mathcal A))$ has an antipode) can be checked easily by hand -- the proof is similar to the one for $T(T^+(\mathcal A))$, as defined above. However, it also follows automatically from the group structure of $\mathcal G(\mathcal A)$; for general properties of (pro-affine) algebraic groups, and the bijection between the group and linear forms on $T^+(\mathcal A)$ (or, equivalently, functions on the chosen set of coordinates $\mathcal Y$), see e.g.~\cite[Sect.~3.5 and 3.6]{cp2021}.

The coproduct map \eqref{def:coproducts1} has a particular property: it is left-linear, that is, it maps the vector space $T^+(\mathcal{A}) $ of the Hopf algebra generators of $S(T^+(\mathcal{A}))$ as a polynomial algebra to $(\un \oplus T^+(\mathcal{A})) \otimes  S(T^+(\mathcal{A}))$. This property ensures that the following proposition holds (see e.g.~\cite[6.4 Left-Linear Groups and Fa\`a di Bruno]{cp2021}).

Let the map, called later on the linearised coproduct map, 
\begin{equation}
\label{lincoprod1}
	\delta_m \colon T^+(\mathcal{A}) \to T^+(\mathcal{A}) \otimes T^+(\mathcal{A})
\end{equation} 
be defined in terms of the monotone coproduct \eqref{def:coproducts1}, 
$$
	\delta_m := (p \otimes p) \Delta_m,
$$ 
where the map $p \colon S(T^+(\mathcal{A})) \to T^+(\mathcal{A})$ is the projector that is zero on $\un $ as well as on products in $S^+(T^+(\mathcal{A}))$, i.e., elements $(w|v)$ with $w \in T^+(\mathcal{A})$ and $v \in S^+(T^+(\mathcal{A}))$ are mapped to zero, and it is the identity on $T^+(\mathcal{A})$. The  linearised coproduct can be described explicitly. Indeed, for any word $w=a_1 \cdots a_n \in T^+(\mathcal{A})$ we have
\begin{equation*}
	\delta_m(w)=\sum_{\emptyset \neq S_c\subsetneq [n]} a_{[n]-S_c} \otimes a_{S_c},
\end{equation*} 
where $\emptyset \neq S_c\subsetneq [n]$ denotes a non-empty subset which has a single connected component, that is, a non-empty interval in $[n]$. For instance 
\begin{align*}
	\delta_m(a_1 a_2 a_3) 
	&=  a_2 a_3 \otimes a_1 + a_1 a_2 \otimes a_3 + a_1 a_3 \otimes a_2
	 	+ a_1  \otimes a_2a_3 +  a_3  \otimes a_1 a_2\\
	\delta_m(a_1 a_2 a_3a_4) 
	&=  a_2 a_3a_4 \otimes a_1 + a_1a_3a_4 \otimes a_2 + a_1a_2a_4 \otimes a_3 + a_1 a_2a_3 \otimes a_4 
		+ a_1  \otimes a_2a_3a_4 \\
	&\quad +  a_4  \otimes a_1 a_2a_3 + a_1a_2  \otimes a_3a_4 
		+ a_3a_4  \otimes a_1a_2 +a_1a_4  \otimes a_2a_3.  
\end{align*}
Observe that $\delta_m$ is neither coassociative nor cocommutative. Instead, we find the following.
 
\begin{prop}\label{prop:coPreLie}
The linearised coproduct \eqref{lincoprod1} satisfies the left pre-Lie coalgebra identity
\begin{equation}
\label{coprelie}
	\mathrm{a}_m = (\tau \otimes \id)\mathrm{a}_m,    
\end{equation} 
where $\mathrm{a}_m:=(\delta_m \otimes \id)\delta_m - (\id \otimes \delta_m)\delta_m$.
The map $\tau$ is defined by switching tensor products, $\tau(x\otimes y):=y\otimes x$.
\end{prop}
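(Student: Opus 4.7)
The plan is a direct combinatorial expansion. Writing $w = a_1\cdots a_n$, every term appearing in $(\delta_m \otimes \id)\delta_m(w)$ or in $(\id \otimes \delta_m)\delta_m(w)$ is an elementary tensor $a_X \otimes a_Y \otimes a_Z$ labelled by an ordered triple $(X, Y, Z)$ of pairwise disjoint non-empty subsets of $[n]$ with $X \sqcup Y \sqcup Z = [n]$. The first step is to identify, for each such triple, its coefficient in each of the two iterations by applying \eqref{lincoprod2} twice.

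For $(\id \otimes \delta_m)\delta_m(w)$ the outer cut produces an interval $I$ in the second tensor slot, and the inner cut picks a proper sub-interval of $I$; the triple $(X,Y,Z)$ thus appears with coefficient $1$ precisely when $Z$ and $I=Y\cup Z$ are non-empty proper intervals of $[n]$ with $Z\subsetneq Y\cup Z$. In that case $Y=(Y\cup Z)\setminus Z$ is either one subinterval adjacent to $Z$ on one side or two subintervals flanking $Z$ on both sides inside $Y\cup Z$. For $(\delta_m \otimes \id)\delta_m(w)$ the outer cut produces the interval $Z$, and the inner cut picks a connected subword of $a_{[n]-Z}$. Decomposing $[n]-Z=L\sqcup R$ into the segments lying respectively to the left and right of $Z$, a connected subword of $a_{[n]-Z}$ corresponds to $Y$ being either (a) an interval of $[n]$ contained in $L$, (b) an interval of $[n]$ contained in $R$, or (c) the union of a non-empty suffix of $L$ with a non-empty prefix of $R$. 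In (a) and (b) the set $Y$ may or may not touch $Z$; in (c) the shape automatically forces $Y$ to abut $Z$ on both sides.

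The key comparison is that the triples in which $Y\cup Z$ is itself an interval of $[n]$ are exactly the ``adjacent'' sub-cases of (a) and (b) together with all of (c), and these coincide precisely with the triples arising from $(\id\otimes\delta_m)\delta_m$. Subtracting, $\mathrm{a}_m(w)$ is therefore supported on those triples where $Y$ is a non-empty interval of $[n]$ contained entirely in $L$ or entirely in $R$ and separated from $Z$ by at least one position. In other words,
\[
\mathrm{a}_m(w)=\sum a_X \otimes a_Y \otimes a_Z,
\]
where the sum ranges over ordered pairs $(Y,Z)$ of disjoint non-empty intervals of $[n]$ having a gap of at least one element between them, with $X:=[n]-Y-Z$. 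This description is manifestly symmetric in the pair of intervals $(Y,Z)$, which, after applying the corresponding transposition of tensor slots on both sides of the identity, is exactly the pre-Lie coalgebra identity \eqref{coprelie}.

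The only step needing real care in the expansion is case (c): one must verify that a spanning connected subword of $a_L a_R$ automatically has its left and right parts abutting $Z$, so that its contribution is always swallowed by the cancellation with the corresponding ``two-piece flanking'' term from $(\id\otimes\delta_m)\delta_m$. Beyond this bookkeeping, the argument is routine and produces the closed combinatorial description of $\mathrm{a}_m(w)$ from which the symmetry, and hence the proposition, follows at once.
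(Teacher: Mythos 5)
Your expansion of the two iterated coproducts is correct: with $\delta_m$ as in \eqref{lincoprod2}, the triples contributing to $(\id\otimes\delta_m)\delta_m(w)$ are exactly those with $Z$ and $Y\cup Z$ nonempty proper intervals of $[n]$, those contributing to $(\delta_m\otimes\id)\delta_m(w)$ are those with $Z$ a nonempty proper interval and $Y$ a connected subword of $a_{[n]-Z}$, the cancellation works exactly as you describe, and one is left with
\begin{equation*}
\mathrm{a}_m(a_1\cdots a_n)=\sum a_{[n]-Y-Z}\otimes a_{Y}\otimes a_{Z},
\end{equation*}
the sum running over ordered pairs $(Y,Z)$ of disjoint nonempty intervals of $[n]$ separated by at least one element. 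This direct combinatorial route is perfectly legitimate (the paper itself only invokes left-linearity of $\Delta_m$ and a reference rather than giving a computation), and the closed formula is a useful by-product.

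The gap is in your final sentence. The symmetry your formula exhibits is in the pair $(Y,Z)$, i.e.\ in the \emph{second and third} tensor slots: it proves $\mathrm{a}_m=(\id\otimes\tau)\mathrm{a}_m$, whereas \eqref{coprelie} asserts invariance under $\tau\otimes\id$, which exchanges the first and second slots, and no ``transposition applied to both sides'' converts one statement into the other. Indeed, with \eqref{lincoprod2} taken literally (interval in the right-hand slot) the displayed identity fails already for $w=a_1a_2a_3$: your formula (or a two-line check) gives $\mathrm{a}_m(a_1a_2a_3)=a_2\otimes a_3\otimes a_1+a_2\otimes a_1\otimes a_3$, which is $(\id\otimes\tau)$-invariant but not $(\tau\otimes\id)$-invariant. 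Equivalently, what you have actually proved is that the dual product $\alpha\rhd\beta=(\alpha\otimes\beta)\circ\delta_m$ has its associator symmetric in the last two arguments (a right pre-Lie identity), or that the flipped coproduct $\tau\circ\delta_m$, with the interval in the left slot, satisfies the $(\tau\otimes\id)$-identity as stated. To finish, you must either run the argument for $\tau\circ\delta_m$ (your closed formula yields this immediately) or state explicitly that \eqref{coprelie} is to be read with the slots of $\delta_m$ exchanged; as written, the concluding inference is not valid, and the vague phrase about transposing slots hides a genuine left/right convention mismatch between \eqref{lincoprod2} and \eqref{coprelie} that a complete proof has to confront explicitly.
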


For completeness, let us conclude the analysis of the monotone group of states with its Lie theoretic part. Once again, the following results are a consequence of the fact that $S(T^+(\mathcal A))$, as a polynomial Hopf algebra, is graded and connected (its degree zero component is the ground field) and left linear (this last property being relevant only for the assertions on the connections between the Lie and pre-Lie structures). These results are strictly similar to those holding for the group of characters on $T(T^+(\mathcal A))$ and its pre-Lie and Lie algebra of infinitesimal characters, and follow for the same reasons (see e.g.~\cite{ebrahimipatras_17}). They are worth being stated, but we omit proofs. 

A linear form $\kappa \in S(T^+(\mathcal A))^\ast$ is called an infinitesimal character, if $\kappa(\mathbf{1})=0$ (recall that $\mathbf 1$ is the unit of the product of $S(T^+(\mathcal A))$, and if $\kappa(w_1|\cdots|w_n)=0$ for all $w_1,\ldots,w_n \in T^+(\mathcal A)$, $n\geq 2$. Let us write $g(\mathcal A)$ for the set of linear forms $\mu$ on $T^+(\mathcal A)$ such that $\mu(\mathbf{1})=0$ and $g_S(\mathcal A)$ for the set of infinitesimal characters on $S(T^+(\mathcal A))$. There is an obvious bijection between the two sets: the restriction map from $S(T^+(\mathcal A))^\ast$ to $T^+(\mathcal A)^\ast$ defines for example the bijection from $g_S(\mathcal A)$ to $g(\mathcal A)$. 

Infinitesimal characters are closed under commutator brackets defined on $S(T^+(\mathcal A))^\ast$ in terms of the convolution product $\star_m$ on $S(T^+(\mathcal A))^\ast$ dual to the coproduct $\Delta_m$ on $S(T^+(\mathcal A))$
\begin{equation}
\label{Lie1}
	[\alpha , \beta] := \alpha \star_m \beta - \beta \star_m \alpha,
\end{equation} 
for $\alpha,\beta \in g_S(\mathcal A)$. Hence, $g_S(\mathcal A)$ is a Lie algebra. We call $\star_m$ the monotone convolution product (on $S(T^+(\mathcal A))^\ast$). Since $\alpha(\mathbf{1}) =0$ for $\alpha \in g_S(\mathcal A)$, the exponential defined by its power series with respect to monotone convolution, $\exp^{\star_m}(\alpha)(w) := \nu + \sum_{j > 0} \frac{1}{j!}\alpha^{{\star_m} j}(w)$, is a finite sum terminating at $j=\deg(w)$, and defines a bijection from $g_S(\mathcal A)$ onto $\mathcal G_S(\mathcal A)$. The compositional inverse of $\exp^{\star_m}$ is given by the logarithm, $\log^{\star_m}( \nu + \gamma)(w)=\sum_{k \ge 1}\frac{(-1)^{k-1}}{k}\gamma^{{\star_m}k}(w)$, where $\gamma \in g_S(\mathcal A)$ and $\nu$ is the canonical augmentation map from $S(T^+(\mathcal A))$ to the ground field.
Again the sum terminates at $k=\deg(w)$ for any $w \in H$ as $\gamma(\mathbf{1}) =0$. This is a general phenomenon \cite[Sect.~3.5]{cp2021}.

The exponential $\exp^{\star_m}$ can be computed explicitly:

\begin{thm}
Let $(\mathcal A,\varphi)$ be a non-commutative probability space with unital linear map $\varphi \colon \mathcal A \to \mathbb{K}$. Let $\Phi$ denote its extension to $S(T^+(\mathcal A))$ as a character. Let $\rho$ be the infinitesimal character defined as $\rho:=\log^{\star_m}(\Phi)$. For a word $w=a_1 \cdots a_n \in T^+(\mathcal A)$, we have the relation
\begin{equation}
\label{monotone2}
	\Phi(w)=\exp^{\star_m}(\rho)(w) 
		= \sum_{j=1}^n \frac{\rho^{\star_m j}(w)}{j!} 
		= \sum\limits_{\pi \in NC_n} \frac{1}{\mathrm{t}(\pi)!} \text{h}_{\pi}(a_1, \ldots, a_n).
\end{equation}
\end{thm}

The tree factorial $\mathrm{t}(\pi)!$ corresponds to the forest $\mathrm{t}(\pi)$ of rooted trees encoding the nesting structure of the non-crossing partition $\pi \in NC_n$, see \cite{Arizmendi_15} for definitions, and  $h_\pi(a_1, \ldots, a_n):=\prod_{\pi_i \in \pi} \rho(a_{\pi_i})$. 

\begin{proof}
We omit the proof of the theorem. Indeed, the calculation is the same as the one proving the identity when $\star_m$ is replaced by the convolution product $\ast$ of linear forms on $T(T^+(\mathcal A))$ (see \cite{ebrahimipatras_17}). This follows from the fact that the coproducts \eqref{HopfAlg} and \eqref{def:coproducts1} differ only by the fact that the terms on the right hand side belong respectively to $T(T^+(\mathcal A))$ and $S(T^+(\mathcal A))$. 
\end{proof}

\begin{cor}
Equation \eqref{monotone2} reproduces the monotone moment-cumulant relation of \cite{hasebesaigo_11}
\begin{equation*}
	\Phi(w)= \sum\limits_{\pi \in NC_n} \frac{1}{\mathrm{t}(\pi)!} \text{h}_{\pi}(a_1, \ldots, a_n).
\end{equation*}
This implies that $\text{h}_n(a_1,\ldots,a_n):=\rho(a_1 \cdots a_n)$ identifies with the $n$th multivariate monotone cumulant maps from~\eqref{eq:monotoneMCrel}.
\end{cor}

\begin{cor}
The equation also implies that the bijection $\exp^{\star_m}$  between  $g_S(\mathcal A)$ and $\mathcal G_S(\mathcal A)$  and its inverse $\log^{\star_m}$ identify (up to canonical isomorphisms) with the bijection $\exp^\ast$, respectively its inverse $\log^\ast$ between $g_T(\mathcal A)$ and $\mathcal G_T(\mathcal A)$, the Lie algebra of infinitesimal characters, respectively group of characters in $T(T^+(\mathcal A))^\ast$ (see \cite{ebrahimipatras_17}).
\end{cor}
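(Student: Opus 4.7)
The plan is to reduce the corollary to the agreement of the two convolution products under a canonical identification, after which the exponentials and their inverses, being universal power series in the convolution product, correspond automatically.

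First I would make the canonical isomorphisms explicit. Since $T(T^+(\mathcal{A}))$ is the free associative and $S(T^+(\mathcal{A}))$ the free commutative algebra on the vector space $T^+(\mathcal{A})$, every character on either Hopf algebra is uniquely determined by its restriction to $T^+(\mathcal{A})$, i.e.~by the associated generalized measure in $\mathcal G(\mathcal A)$; this yields the canonical bijection $\mathcal G_T(\mathcal A) \cong \mathcal G_S(\mathcal A)$. Likewise, the defining conditions of an infinitesimal character (vanishing on $\mathbf 1$ and on all products of two elements in the augmentation ideal) show that infinitesimal characters on either Hopf algebra are equally determined by their restrictions to $T^+(\mathcal A)$, yielding the canonical bijection $g_T(\mathcal A) \cong g_S(\mathcal A)$.

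Second, I would verify that these bijections intertwine the convolution products. For $\Phi, \Psi \in \mathcal G_T(\mathcal A)$ with images $\tilde\Phi, \tilde\Psi \in \mathcal G_S(\mathcal A)$, both $(\Phi * \Psi)(a_1 \cdots a_n)$ and $(\tilde\Phi \star_m \tilde\Psi)(a_1 \cdots a_n)$ unfold to the single sum
\[
\sum_{S \subseteq [n]} \Phi(a_S)\, \Psi(a_{J_1^S})\cdots \Psi(a_{J_k^S}),
\]
since the coproducts $\Delta$ and $\Delta_m$ differ only in the ambient algebra of the second tensor factor (bracket-bar $\top$-product versus commutative bar-product), and both $\Psi$ and $\tilde\Psi$ are multiplicative with identical values on $T^+(\mathcal A)$. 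The same computation treats infinitesimal characters $\alpha,\beta$: infinitesimality kills every term with at least two ``bars'' on the right-hand side in both settings simultaneously. This is precisely the observation invoked in the omitted proof of Theorem~\ref{infmono}.

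Finally, since $\exp^{*}$ and $\exp^{\star_m}$ are defined by the same universal power-series formula in the respective convolution products (with the corresponding counit as the constant term), the established correspondence of convolutions forces $\exp^{\star_m}(\tilde\alpha)$ to be the image of $\exp^{*}(\alpha)$ under $\mathcal G_T(\mathcal A) \cong \mathcal G_S(\mathcal A)$, for every $\alpha \in g_T(\mathcal A)$ with image $\tilde\alpha \in g_S(\mathcal A)$. The statement about $\log^{*}$ and $\log^{\star_m}$ follows at once, as they are the compositional inverses. The only real obstacle is the bookkeeping in step two, which is however entirely mechanical given the parallel structure of the two coproducts; alternatively, one may argue even more directly by observing that the closed formula \eqref{monotone2} already depends only on the values of the infinitesimal character on $T^+(\mathcal A)$, so both sides of the claimed identification produce the same scalars on each word $w \in T^+(\mathcal A)$.
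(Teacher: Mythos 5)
Your proposal is correct and follows essentially the same route as the paper: the paper derives this corollary from the closed formula \eqref{monotone2} together with the observation (made in the omitted proof of Theorem \ref{infmono}) that the coproducts \eqref{HopfAlg} and \eqref{def:coproducts1} differ only in whether the right-hand tensor factor lives in $T(T^+(\mathcal A))$ or $S(T^+(\mathcal A))$, so that characters and infinitesimal characters, being determined by their restrictions to $T^+(\mathcal A)$, yield identical convolution computations. Your more explicit bookkeeping of the canonical bijections and the intertwining of $*$ with $\star_m$ simply makes that implicit argument precise.
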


Going back to Proposition \ref{prop:coPreLie}, we can dualise the pre-Lie coproduct \eqref{lincoprod1} and obtain a left pre-Lie product on $g(\mathcal A)$
\begin{equation}
\label{preLie1}
	\alpha \rhd \beta := (\alpha \otimes \beta) \circ \delta_m,
\end{equation} 
which satisfies the dual of \eqref{coprelie}, i.e., the left pre-Lie relation
\begin{equation*}
	(\alpha \rhd \beta) \rhd \gamma - \alpha \rhd (\beta \rhd \gamma )
	= ( \beta \rhd \alpha) \rhd \gamma - \beta \rhd (\alpha \rhd \gamma).
\end{equation*} 
By antisymmetrization, we have another Lie bracket on $g(\mathcal A)$
\begin{equation}
\label{Lie2}
	\llbracket \alpha , \beta \rrbracket := \alpha \rhd \beta - \beta \rhd \alpha.
\end{equation} 

\begin{prop}
The space $g(\mathcal A)$ is closed under the pre-Lie product \eqref{preLie1}, i.e., $(g(\mathcal A),\rhd)$ is a pre-Lie algebra. 
Furthermore, on the space $g(\mathcal A)\cong g_S(\mathcal A)$ the two Lie brackets \eqref{Lie1} and \eqref{Lie2} coincide, that is, $\llbracket \alpha , \beta \rrbracket = [ \alpha, \beta ]$, for $ \alpha, \beta \in g(\mathcal A)$.   
\end{prop}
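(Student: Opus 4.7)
The plan is to handle closure and the pre-Lie identity for $\rhd$ first, then reduce the bracket equality to a pointwise identity on $T^+(\mathcal A)$. Closure of $g(\mathcal A)$ under $\rhd$ is immediate since $\delta_m$ lands in $T^+(\mathcal A) \otimes T^+(\mathcal A)$: the composition $(\alpha \otimes \beta) \circ \delta_m$ is automatically a linear form on $T^+(\mathcal A)$, hence in $g(\mathcal A)$. For the pre-Lie identity, I would dualise Proposition \ref{prop:coPreLie}: writing out $((\alpha \rhd \beta) \rhd \gamma)(w)$ and $(\alpha \rhd (\beta \rhd \gamma))(w)$ in Sweedler notation for the iterated $\delta_m$, their difference evaluated on $w$ equals $(\alpha \otimes \beta \otimes \gamma)(\mathrm{a}_m(w))$, and the coidentity $\mathrm{a}_m = (\tau \otimes \id)\mathrm{a}_m$ then translates directly into the left pre-Lie relation after swapping the roles of $\alpha$ and $\beta$.

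For the bracket comparison, I would prove the stronger pointwise identity that for every $w = a_1 \cdots a_n \in T^+(\mathcal A)$ and every pair $\alpha, \beta \in g_S(\mathcal A)$,
\[
 (\alpha \star_m \beta)(w) = (\alpha \rhd \beta)(w).
\]
Unpacking the left-hand side using the coproduct \eqref{def:coproducts1} gives
\[
 (\alpha \star_m \beta)(w) = \sum_{S \subseteq [n]} \alpha(a_S)\,\beta\bigl(a_{J^S_1} \,|\,\cdots\,|\,a_{J^S_k}\bigr).
\]
Now I apply the defining vanishing properties of infinitesimal characters: $\alpha(\mathbf 1) = \beta(\mathbf 1) = 0$ kills the boundary cases $S = \emptyset$ and $S = [n]$, while $\beta$ vanishing on every bar-product of length $\geq 2$ in $H^+$ forces $k = 1$, i.e. $[n] - S$ is a single non-empty interval. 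Re-indexing by $S_c := [n] - S$, the surviving sum becomes $\sum_{\emptyset \neq S_c \subsetneq [n],\; S_c \text{ an interval}} \alpha(a_{[n] - S_c})\,\beta(a_{S_c})$, which matches $(\alpha \otimes \beta)(\delta_m(w)) = (\alpha \rhd \beta)(w)$ via formula \eqref{lincoprod2}. Antisymmetrising in $\alpha, \beta$ then yields $[\alpha, \beta] = \llbracket\alpha, \beta\rrbracket$ on $T^+(\mathcal A)$, hence on $g(\mathcal A) \cong g_S(\mathcal A)$.

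The argument is essentially combinatorial bookkeeping rather than conceptual; the only care required is to line up the supports of $\Delta_m$ (all subsets $S \subseteq [n]$) and $\delta_m$ (non-empty proper intervals) via the vanishing of infinitesimal characters on $\mathbf 1$ and on bar-products of length at least two. Once this alignment is made, the identity holds term-by-term, which is strictly stronger than the claimed equality of brackets and explains why antisymmetrisation is not actually needed to witness the coincidence.
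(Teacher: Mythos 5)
Your argument is correct. Closure is indeed immediate from $\delta_m$ taking values in $T^+(\mathcal A)\otimes T^+(\mathcal A)$; the pre-Lie identity is exactly the dual of \eqref{coprelie} (your Sweedler bookkeeping with $(\tau\otimes\id)$ is right); and the key computation is sound: for infinitesimal characters $\alpha,\beta$ on $S(T^+(\mathcal A))$, the terms of $\Delta_m(a_1\cdots a_n)$ with $S=\emptyset$ or $S=[n]$ die because $\alpha(\mathbf 1)=\beta(\mathbf 1)=0$, and the terms with $k\geq 2$ connected components of $[n]-S$ die because $\beta$ kills bar-products, leaving precisely the sum defining $\delta_m$ in \eqref{lincoprod2}, so $(\alpha\star_m\beta)(w)=(\alpha\rhd\beta)(w)$ for all $w\in T^+(\mathcal A)$. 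Since infinitesimal characters are determined by their restriction to the generators $T^+(\mathcal A)$ (which is exactly the identification $g_S(\mathcal A)\cong g(\mathcal A)$), antisymmetrising gives $[\alpha,\beta]=\llbracket\alpha,\beta\rrbracket$ under that identification; your remark that the identity already holds at the level of $\star_m$ versus $\rhd$ on generators, before antisymmetrisation, is accurate (note it is only an identity on generators: $\alpha\star_m\beta$ need not vanish on bar-products, so it is not itself the infinitesimal-character extension of $\alpha\rhd\beta$). The paper does not spell out a proof at all: it appeals to the general theory of graded, connected, left-linear polynomial Hopf algebras (the Fa\`a di Bruno-type arguments of \cite[Sect.~6.4]{cp2021}) and to the analogous statements for $T(T^+(\mathcal A))$ in \cite{ebrahimipatras_17}, for which the proposition is an instance. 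Your direct verification via the vanishing properties of infinitesimal characters is a legitimate, more elementary and self-contained route; what the paper's viewpoint buys instead is that the same left-linearity argument covers all such Hopf algebras at once, without touching the specific combinatorics of $\Delta_m$.
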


The proposition should be related to Proposition \ref{prelieprop}. Here, an important remark is in order: the pre-Lie products defined in these two propositions are related but different. However, they give rise to the same Lie algebra. This reflects the fact that some tools that are available when working with $T(T^+(\mathcal A))$, such as the half-shuffle products, are not available when working in what we call here the ``probabilistic approach'', i.e., in the context of $S(T^+(\mathcal A))$.


\section{The Boolean group of states}
\label{sect:bg}

Seen from the Hopf algebraic point of view, the construction of a group of states in the Boolean and free probability cases results from a different approach than the one used in the monotone case. 

Let us be more precise. We first introduce another notation. Given a linear map $\alpha\in T^+(\mathcal A)^\ast$, we extend it to an infinitesimal character denoted $ichar(\alpha)$ on $T(T^+(\mathcal A))$. For $k \not=1$, it maps by definition the components $T^+(\mathcal A)^{\otimes k}$ of $T(T^+(\mathcal A))$ to zero. As such the extension is well defined. Conversely, we write $igm$ for the inverse map from infinitesimal characters to elements of $T(\mathcal A)^\ast$.
The following definition and its motivation can be found in reference \cite{ebrahimipatras_19}. Recall the definition of the two half-shuffle exponentials and logarithms in Section \ref{sect:GS}.

\begin{defn}[Boolean group of states, algebraic definition]\label{defnBglawalg}
The set $\mathcal G(\mathcal A)$ equipped with the product
$$
	\phi \ogreaterthan \psi
	:=gm( \mathcal{E}_\succ(\mathcal{L}_\succ(char(\phi))+\mathcal{L}_\succ(char(\psi))) )
$$
is a group called the {\it Boolean group of states}.
\end{defn}

Recall also the Boolean moment-cumulant relation~\eqref{eq:booleanMCrel}, which we rewrite as
\begin{equation}
\label{eq:booleanc}
			\phi(a_1\cdots a_n)=\sum_{\pi \in \operatorname{Int}_n} \text{b}_\pi(a_1 \cdots a_n),
\end{equation}
where as usual $\text{b}_\pi(a_1 \cdots a_n):=\prod_{\pi_k \in \pi} \text{b}_{|\pi_k|}(a_{\pi_k})$.
Recall that $\operatorname{Int}_n$ stands for the set of Boolean (i.e.~interval) partitions of $[n]$. 
Given a non-commutative probability space $(\mathcal A,\varphi)$ with the usual extension of $\varphi$ to $\phi \in T^+(\mathcal A)^\ast$, relations~\eqref{eq:booleanc} define implicitly the Boolean cumulants $\{\text{b}_n\}_{n>0}$ in terms of $\phi$ respectively the state map $\varphi$.
By M\"obius inversion,
\begin{equation}
\label{boolean2}
			\text{b}_n(a_1\cdots a_n)=\sum_{\pi \in \operatorname{Int}_n}(-1)^{|\pi|-1} \phi_\pi(a_1 \cdots a_n).
\end{equation}
Again, $\phi_\pi(a_1 \cdots a_n):=\prod_{\pi_k \in \pi} \phi(a_{\pi_k})$. Notice for later use that Equation \eqref{boolean2} shows that $\text{b}_n(a_1\cdots a_n)=\phi(a_1 \cdots a_n)+\text{l.o.t.}$, where $\text{l.o.t.}$ refers to a sum of products of evaluations of $\phi$ on words of length strictly less than $n$. Recall also from \cite{ebrahimipatras_17} the following statement:

\begin{prop}
With the same notation as above, we have
\begin{equation*}
	\phi =gm( \mathcal{E}_{\succ} (ichar(\mathrm{b}))),
\end{equation*}
\begin{equation}
\label{def:Bcumul2}
	\mathrm{b} =igm( \mathcal{L}_{\succ} (char(\phi))).
\end{equation}
\end{prop}

Let us translate these results in the language of algebraic groups. We fix a basis  $\mathcal B=(b_i)_{i \in\NN}$ of $\mathcal A$. As previously, given a sequence $I=(i_1,\ldots,i_k)\in\mathbb N^k$, we set $b_I:=b_{i_1} \cdots b_{i_k}\in T^+(\mathcal A)$. The set $\mathcal Y$ of words $b_I$ defines then a (complete) system of coordinates on $\mathcal G(\mathcal A)$. 
The definition of the Boolean group law on $\mathcal G(\mathcal A)$ implies that $\phi \ogreaterthan \!\psi(b_I)$ can be expanded as a linear combination of products of terms $\phi(b_J)$ and $\psi(b_K)$, where $J$ and $K$ stand for subsets of $I$. In other words, the $b_I$ are representative functions on the group and $\mathcal G(\mathcal A)$ is the group of characters of its Hopf algebra of representative functions generated by the elements of $\mathcal Y$ (see again \cite{cp2021} for details). From \eqref{boolean2} and the remark that follows the equation, the monomials
$$
	K_{b,I}:=\sum_{\pi \in \operatorname{Int}_n}(-1)^{|\pi|-1} \prod\limits_{\pi_i\in\pi}b_{\pi_i}\in S(T^+(\mathcal A))
$$
form another system of coordinates of the group that we call the {\it Boolean coordinates} and denote by $\mathcal {B}ool$.\footnote{As pointed out to us by the referee, they can be understood as Boolean analog of (classical) generalized $k$-statistics, advocated for example by T.~P.~Speed \cite{speed}. The same remark applies in the free case presented in the next section.} From the group law $\ogreaterthan$ in Definition \ref{defnBglawalg} together with \eqref{def:Bcumul2}, we obtain
$$
	\mathcal{L}_{\succ}(char(\phi \ogreaterthan \!\psi))
	=\mathcal{L}_{\succ}(char(\phi))  +\mathcal{L}_{\succ}(char(\psi))
$$
and
$$
	\phi \ogreaterthan \!\psi (K_{b,I})=\phi(K_{b,I})+\psi(K_{b,I}).
$$
Hence, in the system of Boolean coordinates, the group law is additive and we get the alternative presentation of the group $(\mathcal G(\mathcal A),\ogreaterthan)$:

\begin{cor}\label{coprolinB}
The Boolean group of states is, up to a canonical isomorphism, the group of characters of the Hopf algebra $S({\mathcal B}ool)\cong S(T^+(\mathcal A))$ equipped with the product of polynomials over ${\mathcal B}ool$ and the coproduct 
$$
	\Delta_B(K_{b,I}):=K_{b,I} \otimes \mathbf 1 + \mathbf 1 \otimes K_{b,I}.
$$
\end{cor}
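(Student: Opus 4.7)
The plan is to exploit the general correspondence between pro-affine algebraic groups and commutative Hopf algebras of representative functions: once we know the $\{bool_I\}$ form a complete system of coordinates on $(\mathcal{G}(\mathcal{A}),\ogreaterthan)$ and that the group law is additive in these coordinates, the corollary follows essentially by assembling pieces already established. The argument splits into three short steps.

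First, I would verify that the $bool_I$ freely generate $S(T^+(\mathcal{A}))$ as a commutative algebra. The defining relation $bool_I=\sum_{\pi\in B_n}(-1)^{|\pi|-1}\prod_{\pi_j\in\pi}b_{\pi_j}$ shows, by inspection (every interval partition of $[n]$ different from the one-block partition $\{[n]\}$ refines into blocks of length strictly less than $n$), that $bool_I = b_I + \text{l.o.t.}$, where the lower-order terms are polynomials in the $b_J$ for $|J|<|I|$. Hence the change of coordinates $\{b_I\}\leadsto\{bool_I\}$ is upper unitriangular with respect to the length filtration and therefore invertible, giving $S(\mathcal{B}ool)\cong S(T^+(\mathcal{A}))$ as commutative unital algebras. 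This also shows that $\mathcal{B}ool$ is a complete system of coordinates on $\mathcal{G}(\mathcal{A})$: the map $\phi\mapsto\{\phi(bool_I)\}_I$ is still a bijection.

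Second, I would set up the Hopf algebra structure. The assignment $bool_I\mapsto bool_I\otimes\mathbf{1}+\mathbf{1}\otimes bool_I$ extends uniquely to an algebra morphism $\Delta_B\colon S(\mathcal{B}ool)\to S(\mathcal{B}ool)\otimes S(\mathcal{B}ool)$ by the universal property of the symmetric algebra. Coassociativity and cocommutativity are immediate on primitive elements and hence pass to the whole algebra. Since $S(\mathcal{B}ool)$ inherits a grading from the length grading on $T^+(\mathcal{A})$, making it graded connected, the antipode exists automatically, and $(S(\mathcal{B}ool),\Delta_B)$ is a commutative cocommutative graded connected Hopf algebra.

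Third, I would check that the evaluation map $ev\colon\mathcal{G}(\mathcal{A})\to\mathrm{Hom}_{\mathrm{alg}}(S(\mathcal{B}ool),\mathbb{K})$ given by $ev(\phi)(bool_I):=\phi(bool_I)$ is a group isomorphism. Bijectivity is the completeness from Step 1. For the group law, it suffices to evaluate on generators because both sides are algebra morphisms (characters on one side, and $ev(\phi\ogreaterthan\psi)$ is by construction an algebra morphism on the polynomial algebra generated by the $bool_I$). On a generator $bool_I$, the convolution dual to $\Delta_B$ gives
$$
(ev(\phi)\star_B ev(\psi))(bool_I)
=(ev(\phi)\otimes ev(\psi))(\Delta_B(bool_I))
=\phi(bool_I)+\psi(bool_I),
$$
which matches $(\phi\ogreaterthan\psi)(bool_I)$ precisely by the additivity identity established just above the corollary statement.

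The only real obstacle is the triangularity step, and even that reduces to a direct reading of the M\"obius inversion formula \eqref{boolean2}: the block of maximal size in an interval partition of $[n]$ has size $n$ only for the trivial partition $\{[n]\}$, whose contribution is $+b_I$, while every other interval partition contributes a product of $b_J$'s with $|J|<n$. With this verified, the rest is formal and runs along exactly the same lines as the classical Theorem~\ref{thm:classical}.
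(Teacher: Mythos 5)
Your proposal is correct and takes essentially the same route as the paper: the paper obtains the corollary from the remark after \eqref{boolean2} that $bool_I=b_I+\text{l.o.t.}$ (so $\mathcal{B}ool$ is a complete coordinate system) together with the additivity $\phi\ogreaterthan\psi(bool_I)=\phi(bool_I)+\psi(bool_I)$ established just before the statement, then invokes the general representative-function/algebraic-group machinery of \cite{cp2021} exactly as in Theorem~\ref{thm:classical}. Your three steps simply spell out that formal machinery (unitriangular change of generators, primitive-generator Hopf structure with antipode from graded connectedness, evaluation map as group isomorphism), so there is no substantive difference.
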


In the following proposition, we use again freely the notation of Section \ref{sect:up}. We write $\mathcal B_1$ and $\mathcal B_2$ for two copies of the tensor algebra $T(\mathcal A)$ and $\phi_1$, $\psi_2$ for two copies of elements $\phi, \psi \in \mathcal G(\mathcal A)$ acting respectively on $\mathcal B_1$ and $\mathcal B_2$. Given an element $a$ in $\mathcal A$, we write $a'$ and $a^{\prime\prime}$ for copies of $a$ in $\mathcal B_1$ respectively $\mathcal B_2$, both naturally embedded into the product $\mathcal B_1\oast\mathcal B_2$

\begin{prop}[Boolean group of states, probabilistic definition]
\label{idprodb}
The set $\mathcal G(\mathcal A)$ equipped with the product
$$
	\phi\ogreaterthan \!\psi(a_1\cdots a_n):=\phi_1 \bullet \psi_2((a_1'+a_1'')\cdots (a_n'+a_n'')), 
$$
is a group that identifies with the Boolean group of states defined previously.
\end{prop}

We will decompose the proof of the proposition into several steps.
In the following arguments, we will use only the fact that the probabilistic definition of the product implies its associativity (the existence of a group structure will follow later from the identification of the two definitions of the product).
Let us first compute explicitly the product law of $\mathcal G(\mathcal A)$ as defined in Proposition \ref{idprodb} in the coordinates $\mathcal Y$, that is, the corresponding coproduct that we denote temporarily $\Delta_b$. From the definition of the product $\bullet$ of states, we obtain the cocommutative coproduct map:
$$
	\Delta_b  \colon T^+(\mathcal{A}) \to S(T^+(\mathcal{A})) \otimes S(T^+(\mathcal{A})),
$$ 
\begin{equation}
\label{coprodbool}
	\Delta_b(a_1 \cdots a_n) 
	:= \sum_{S \subseteq [n]} (a_{S_1}  | \cdots | a_{S_l}) \otimes(a_{J^S_1} | \cdots | a_{J^S_k}).
\end{equation} 
It is extended multiplicatively to the polynomial algebra $S(T^+(\mathcal{A}))$ by $\Delta_b(\mathbf{1}):= \mathbf{1} \otimes \mathbf{1}$ and
$$
	\Delta_b(w_1 | \cdots | w_n) := \Delta_b(w_1) \cdots \Delta_b(w_n).
$$
For example, the reduced coproduct action on the word $a_1 a_2 a_3$ yields:
\begin{align*}
	\overline\Delta_b(a_1 a_2 a_3)
	&=  a_2 a_3 \otimes a_1 + a_1 a_2 \otimes a_3 + (a_1 | a_3) \otimes a_2\\
	&\quad + a_1  \otimes a_2a_3 + a_2  \otimes (a_1 | a_3) +  a_3  \otimes a_1 a_2.  
\end{align*}

We write $H_b$ for the Hopf algebra $S(T^+(\mathcal{A}))$ equipped with the product of polynomials and with the coproduct defined in \eqref{coprodbool}. The corresponding convolution product of linear forms on $H_b$ is denoted $\star_b$. 

\begin{lem}\label{lem:boolean_inf_char}
Let $\alpha_1,\ldots,\alpha_k$ be infinitesimal characters on $H_b$ and $w=a_1\cdots a_n \in T(\mathcal{A})$ a word. Then, we have:
\begin{equation*}
	\scal{\alpha_1\star_b\cdots\star_b \alpha_k, w} 
	= \scal{\alpha_1\otimes\cdots \otimes \alpha_k, \sum_{\pi_1,\ldots, \pi_k} 
	a_{\pi_1}\otimes\cdots \otimes a_{\pi_k}},
\end{equation*}
where the sum is over all disjoint subsets $\pi_1,\ldots, \pi_k$ of $[n]$ such that $\pi_1\cup\cdots\cup \pi_k = [n]$ and each $\pi_i$ is an interval (that is, the sum is over ordered Boolean partitions: Boolean partitions equipped with an arbitrary total ordering of their blocks).
\end{lem}

\begin{proof}
From the definition of $\Delta_b$, only Boolean partitions can occur in the expansion of $\scal{\alpha_1\star_b\cdots\star_b \alpha_k, w}$ that is obtained recursively from the associativity of $\star_b$. We get that this expansion has necessarily the form
$$
	\scal{\alpha_1\otimes\cdots \otimes \alpha_k, 
	\sum_{\pi_1,\ldots, \pi_k}\mu_\pi\cdot a_{\pi_1}\otimes\cdots \otimes a_{\pi_k}},
$$
for certain scalar coefficients $\mu_\pi$, where the sum is over ordered Boolean partitions $\pi=\{\pi_1,\ldots, \pi_k\}$. The statement in the lemma amounts therefore to the assertion that $\mu_\pi=1$, for any $\pi$.
However, as the $\star_b$ product is commutative (or, equivalently, as the coproduct $\Delta_b$ is cocommutative), $\mu_\pi=\mu_{\sigma{\pi}}$, for any permutation $\sigma$ of $[k]$ acting on $\pi$ by permutation of the blocks. We can therefore obtain a proof of the statement in the lemma by showing that $\mu_\pi=1$ in the particular case where the blocks of $\pi$ (which are intervals) are in natural order.

Let us prove this last property by induction on the number $k$ of blocks. The base case $k=1$ is obvious. Let us fix a canonically ordered Boolean partition $\pi$ on $[n]$ with $k$ blocks and assume that $\pi_k=\{p+1,\dots,n\}$.
The formula for the $\star_b$ product shows that, for an arbitrary linear form $\beta$ on $H_b$, we have that (as $\alpha_k$ is infinitesimal):
\begin{align*}
	\scal{\beta\star_b \alpha_k, w}
	&=\sum\limits_{r\leq n}\scal{\beta\otimes \alpha_k, a_1\cdots a_r\otimes a_{r+1}\cdots a_n
		+ a_{r+1}\cdots a_n \otimes  a_1\cdots a_r}\\
	& \quad +\sum\limits_{1\leq r<l< n}\scal{\beta\otimes \alpha_k, (a_1\cdots a_{r-1}|a_{l+1}\cdots a_n)\otimes a_{r}\cdots a_l}.
\end{align*}
Only the component $\scal{\beta\otimes \alpha_k, a_1\cdots a_p\otimes a_{p+1}\cdots a_n + a_{p+1}\cdots a_n \otimes a_1\dots a_p}$ is relevant for our computation. Replacing $\beta$ by $\alpha_1\star_b\cdots\star_b \alpha_{k-1}$, we get that $\mu_\pi$ is the coefficient of 
$$
	\scal{\alpha_1\otimes\cdots \otimes \alpha_{k-1},  a_{\pi_1}\otimes\cdots \otimes a_{\pi_{k-1}}},
$$
in the expansion of
$\scal{\alpha_1\star_b\cdots\star_b \alpha_{k-1}, a_1\cdots a_p}$, which is equal to 1, by the induction hypothesis. This completes the proof.
\end{proof}

\begin{thm}\label{thmfundBool}
Let $(\mathcal A,\varphi)$ be a non-commutative probability space and $\Phi$ the extension of $\varphi$ to $H_b$ as a character. Let $\alpha$ be the infinitesimal character 
$$
	\alpha:=\log^{\star_b}(\Phi).
$$
For the word $w=a_1 \cdots a_n \in T^+(\mathcal A)$ we obtain
\begin{equation*}
	\Phi(w)
		=\exp^{\star_b}(\alpha)(w) 
		= \sum_{j=1}^n \frac{\alpha^{\star_b j}}{j!}(w) 
		=\sum_{I \in \operatorname{Int}_n} \alpha_I(a_1, \ldots, a_n).
\end{equation*}
In particular, $igm(\alpha)={\mathrm{b}}$, the Boolean cumulant map.
\end{thm}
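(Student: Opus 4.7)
The plan is to reduce the theorem directly to Lemma \ref{lem:boolean_inf_char} via a standard symmetrization of ordered Boolean partitions into unordered ones, and then to match the result against the Boolean moment-cumulant relation recalled just before Equation \eqref{boolean2}. No new combinatorial input is needed beyond what the lemma already provides.

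First I would note that the series $\exp^{\star_b}(\alpha)(w) = \sum_{j \geq 1} \frac{1}{j!}\alpha^{\star_b j}(w)$ is in fact a finite sum terminating at $j=n=\deg(w)$: this is a general consequence of $H_b$ being graded and connected together with $\alpha$ being infinitesimal (so $\alpha$ vanishes on $\mathbf{1}$ and on any product of two elements of $H_b^+$), as already invoked in the definition of $\log^{\star_b}$ preceding the theorem. I would then apply Lemma \ref{lem:boolean_inf_char} with $\alpha_1 = \cdots = \alpha_j = \alpha$ to obtain
$$
\alpha^{\star_b j}(w) = \sum_{(\pi_1,\ldots,\pi_j)} \prod_{i=1}^j \alpha(a_{\pi_i}),
$$
where the sum ranges over all ordered $j$-tuples of pairwise disjoint intervals $\pi_1,\ldots,\pi_j \subseteq [n]$ whose union is $[n]$, i.e., ordered Boolean partitions of $[n]$ with exactly $j$ blocks.

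The key combinatorial step is then that the forgetful map $(\pi_1,\ldots,\pi_j) \mapsto \{\pi_1,\ldots,\pi_j\}$ from ordered Boolean partitions to Boolean partitions in $B_n$ is exactly $j!$-to-one, since the $j$ blocks of any Boolean partition are pairwise distinct intervals. Consequently,
$$
\frac{1}{j!}\alpha^{\star_b j}(w) \;=\; \sum_{\substack{\pi \in B_n \\ |\pi|=j}} \alpha_\pi(a_1,\ldots,a_n),
$$
and summing over $j \in \{1,\ldots,n\}$ yields $\exp^{\star_b}(\alpha)(w) = \sum_{\pi \in B_n} \alpha_\pi(a_1,\ldots,a_n)$. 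Since by construction $\alpha = \log^{\star_b}(\Phi)$ and $\exp^{\star_b}$, $\log^{\star_b}$ are mutually inverse on the connected graded Hopf algebra $H_b$, the left-hand side equals $\Phi(w) = \varphi(w)$, which gives all three displayed equalities of the theorem.

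For the final assertion $igm(\alpha) = \mathrm{b}$, I would compare the identity just derived with the Boolean moment-cumulant formula $\varphi(w) = \sum_{\pi \in B_n} \mathrm{b}_\pi(a_1,\ldots,a_n)$. Both expand as
$$
\varphi(a_1\cdots a_n) \;=\; \alpha(a_1\cdots a_n) + \text{l.o.t.} \;=\; \mathrm{b}_n(a_1\cdots a_n) + \text{l.o.t.},
$$
where in each case the lower-order terms are products of evaluations on words of length strictly less than $n$, as already noted after \eqref{boolean2}. An elementary induction on $n$ then forces $\alpha(a_1 \cdots a_n) = \mathrm{b}_n(a_1\cdots a_n)$ for every word, i.e.\ $igm(\alpha)=\mathrm{b}$. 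I do not anticipate any real obstacle: the only subtle point is the $j!$-to-one character of the symmetrization map, which relies crucially on the fact that in the Boolean (i.e.\ interval) case the blocks of a partition are automatically distinct as subsets of $[n]$, so no over-counting correction is needed.
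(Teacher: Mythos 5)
Your proposal is correct and follows essentially the same route as the paper: both apply Lemma \ref{lem:boolean_inf_char} with all $\alpha_i=\alpha$, use the $j!$-to-one correspondence between ordered and unordered Boolean partitions to cancel the $1/j!$, and sum over the number of blocks. Your extra remarks (finiteness of the exponential series and the triangular induction identifying $\alpha$ with $\mathrm{b}$ via the moment-cumulant relation) only make explicit details the paper leaves implicit.
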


\begin{proof}
Indeed, by Lemma~\ref{lem:boolean_inf_char}:
\[
	\alpha^{\star_b k}(w) = k! \sum_{\pi}\prod_{\pi_i} \alpha(a_{\pi_i}),
\]
where the sum is over Boolean partitions with $k$ blocks $\pi_1,\ldots,\pi_k$. The factor $k!$ appears since there are $k!$ ways to order the blocks $\pi_1,\ldots, \pi_k$ to form an ordered Boolean partition. The conclusion follows by summation over $k$. 
\end{proof}

\begin{proof}[Proof of Proposition \ref{idprodb}]
We are now in the position to conclude the proof that the product in Proposition \ref{idprodb} identifies with the one in Definition \ref{defnBglawalg}. For notational clarity, we write temporarily $\ogreaterthan'$ for the product defined in Proposition \ref{idprodb}.

By Corollary \ref{coprolinB}, it is enough to prove that, given two elements $\phi$ and $\psi$ in $\mathcal G(\mathcal A)$, we have
$$
	\phi\ogreaterthan'\!\psi (K_{b,I})=\phi (K_{b,I})+\psi (K_{b,I})
$$
for $I$ arbitrary. For notational simplicity and without loss of generality, we treat the generic case where $I=(1,\ldots,n)$. 

By Theorem \ref{thmfundBool}, $\log^{\star_b}(\Phi)(b_1\cdots b_n)=\text{b}(b_1\cdots b_n)=\phi (K_{b,I})$. Therefore, as $\Delta_b$ encodes the definition of the product $\ogreaterthan'$,
$$
	\phi\ogreaterthan'\psi (K_{b,I})=\Phi\star_b\Psi (K_{b,I})
	=\log^{\star_b}(\Phi\star_b\Psi)(b_1\cdots b_n),
$$
where we write $\Phi$ and $\Psi$ for the extensions of $\phi$ respectively $\psi$ to characters on $H_b$.
However, as $\star_b$ is commutative, $\log^{\star_b}(\Phi\star_b\Psi)=\log^{\star_b}(\Phi)+\log^{\star_b}(\Psi)$, and we get finally
$$
	\phi\ogreaterthan'\psi (K_{b,I})=(\log^{\star_b}(\Phi)+\log^{\star_b}(\Psi))(b_1\cdots b_n)
	=\phi(K_{b,I})+\psi(K_{b,I}),
$$
which yields $\phi\ogreaterthan'\psi=\phi\ogreaterthan\psi$ and concludes the proof.
\end{proof}


\section{The free group of states}
\label{sect:fg}

We employ freely the notation used in the previous section. Although the arguments are essentially the same as in the Boolean case, we repeat them for the sake of completeness; the present article aims at being a reference on the topic of algebraic groups in non-commutative probability. The following definition and its motivations can be found in reference \cite{ebrahimipatras_19}. Recall the definition of the half-shuffle exponentials and logarithms in Section \ref{sect:GS}.

\begin{defn}[Free probability group of states, algebraic definition]\label{defnFreelawalg}
The set $\mathcal G(\mathcal A)$ equipped with the product
$$
	\phi\olessthan \psi:=gm(\mathcal{E}_\prec(\mathcal{L}_\prec(char(\phi))+\mathcal{L}_\prec(char(\psi)))
$$
is a group called the {\it free convolution group of states}.
\end{defn}

Recall also the moment-cumulant relations in free probability~\eqref{eq:freeMCrel}, which we rewrite as
\begin{equation}
\label{freec}
			\phi(a_1\cdots a_n)=\sum_{\pi \in NC_n} \text{k}_\pi(a_1 \cdots a_n),
\end{equation}
where as usual $\text{k}_\pi(a_1 \cdots a_n):=\prod_{\pi_k \in \pi} \text{k}_{|\pi_k|}(a_{\pi_k})$ and $NC_n$ stands for the set of noncrossing partitions of $[n]$. Given $(\mathcal A,\varphi)$, let $\phi \in T^+(\mathcal A)^\ast$ be the usual extension of the state map $\varphi$. Relations \eqref{freec} define implicitly the family of free cumulants $\{\text{k}_n\}_{n>0}$ in terms of the linear form $\phi$ respectively $\varphi$. By M\"obius inversion, we can express cumulants in terms of moments
\begin{equation}
\label{freec2}
			\mathrm{k}_n(a_1\cdots a_n)=\sum_{\pi \in NC_n}\mu_{\text{NC}}(\pi,1_n) \phi_\pi(a_1 \cdots a_n),
\end{equation}
where $\mu_{\text{NC}}(\pi,1_n)$ denotes the M\" obius function on the lattice of noncrossing partitions \cite{nicaspeicher_06}. Notice for later use that this equation shows that $\text{k}_n(a_1\cdots a_n)=\phi(a_1 \cdots a_n)+\text{l.o.t.}$, where $\text{l.o.t.}$ refers again to a sum of products of evaluations of $\varphi$ on words of length strictly less than $n$. Recall also from \cite{ebrahimipatras_17} the following statement:

\begin{prop}
With the same notation as above, we have
\begin{equation*}
	\phi =gm( \mathcal{E}_{\prec} (ichar({\mathrm{k}}))),
\end{equation*}
\begin{equation}
\label{def:freecumul2}
	\mathrm{k} =igm( \mathcal{L}_{\prec} (char(\phi))).
\end{equation}
\end{prop}

Let us translate these results into the language of algebraic groups.  Recall our notation: as before, we fix a basis  $\mathcal B=(b_i)_{i \in\NN}$ of $\mathcal A$ with $b_I:=b_{i_1} \cdots b_{i_k}$, for $I=(i_1,\ldots,i_k)\in{\mathbb N}^k$. The set $\mathcal Y$ of words $b_I$ defines then a (complete) system of coordinates on $\mathcal G(\mathcal A)$. 
The definition of the free probability group law on $\mathcal G(\mathcal A)$ implies that the terms in $\phi \olessthan \!\psi(b_I)$ rewrite as linear combinations of products of terms $\phi(b_J)$ and $\psi(b_K)$, where $J$ and $K$ stand for subsets of $I$. In other words, the $b_I$ are again representative functions on the group and $\mathcal G(\mathcal A)$ is the group of characters of its Hopf algebra of representative functions generated by the elements of $\mathcal Y$ (see again \cite{cp2021} for details). From \eqref{freec2} and the remark that follows the equation, the monomials
$$
	K_{f,I}:=\sum_{\pi \in NC_n}\mu_{\text{NC}}(\pi,1_n)\prod\limits_{\pi_i\in\pi}b_{\pi_i}
$$
form another system $\mathcal F$ of coordinates of the group that we call the {\it free coordinates}. The group law $\olessthan$ in Definition \ref{defnFreelawalg} and \eqref{def:freecumul2} yield
$$
	\mathcal{L}_{\prec}(char(\phi \olessthan \!\psi))=\mathcal{L}_{\prec}(char(\phi))  +\mathcal{L}_{\prec}(char(\psi))
$$
and
$$
	\phi \olessthan \!\psi (K_{f,I})=\phi(K_{f,I})+\psi(K_{f,I}).
$$
Hence, in the system of free coordinates, the group law is additive and we get the alternative presentation of the group $(\mathcal G(\mathcal A),\olessthan)$:

\begin{cor}\label{coprolinF}
The free probability group of states is, up to a canonical isomorphism, the group of characters of the Hopf algebra $S(\mathcal F)\cong S(T^+(\mathcal A))$ equipped with the product of polynomials over $\mathcal F$ and the coproduct 
$$
	\Delta_f(K_{f,I}):=K_{f,I} \otimes \mathbf 1 + \mathbf 1 \otimes K_{f,I}.
$$
\end{cor}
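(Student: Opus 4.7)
The plan is to mimic the Boolean argument carried out in Corollary \ref{coprolinB}, and essentially to \emph{read off} the statement from the two identities already established just before it. The overall strategy is to package the discussion in the language of representative functions and pro-affine algebraic groups, as in \cite[Sect.~3.5--3.6]{cp2021}.

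First, I would show that the family $\mathcal F=\{free_I\}$ is a complete system of coordinates on $(\mathcal G(\mathcal A),\olessthan)$. Since $free_I=b_I+\text{l.o.t.}$ (the ``lower-order terms'' being products of $b_J$'s with $|J|<|I|$ coming from the M\"obius inversion formula \eqref{freec2}), the transition matrix from $\mathcal Y=\{b_I\}$ to $\mathcal F$ is upper-triangular with unit diagonal, hence invertible. Because $\mathcal Y$ is a complete system of coordinates, so is $\mathcal F$. Next, because the group law on $\mathcal G(\mathcal A)$ expresses $\phi\olessthan\psi(b_I)$ as a polynomial in the values of $\phi$ and $\psi$ on proper subwords of $b_I$, the same is true for $free_I$; hence the $free_I$ are representative functions on $\mathcal G(\mathcal A)$, and the polynomial algebra they generate is a Hopf subalgebra of representative functions, isomorphic as a unital commutative algebra to $S(T^+(\mathcal A))$ via the natural identification $b_I\leftrightarrow free_I$.

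The second step is to identify the coproduct. By the general duality between pro-affine algebraic groups and their Hopf algebras of representative functions, the coproduct $\Delta_f$ of an element $f$ in the Hopf algebra is characterised by
\[
\Delta_f(f)(\phi,\psi)=f(\phi\olessthan\psi),\qquad \phi,\psi\in\mathcal G(\mathcal A).
\]
Applied to $f=free_I$ and combined with the additivity identity $\phi\olessthan\psi(free_I)=\phi(free_I)+\psi(free_I)$ established via \eqref{def:freecumul2} and the morphism property of $\mathcal L_\prec$ proved in the preceding paragraph, this yields exactly
\[
\Delta_f(free_I)=free_I\otimes\mathbf 1+\mathbf 1\otimes free_I.
\]
Extending multiplicatively gives the announced Hopf algebra structure on $S(\mathcal F)$. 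That this is actually a Hopf algebra (counit, antipode, coassociativity) is then automatic from the group structure of $(\mathcal G(\mathcal A),\olessthan)$, exactly as was invoked for the monotone case.

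The main subtle point is the same as in the Boolean argument, namely the passage from the abstract half-shuffle identity $\mathcal L_\prec(char(\phi\olessthan\psi))=\mathcal L_\prec(char(\phi))+\mathcal L_\prec(char(\psi))$ to the concrete additive identity in the $free_I$ coordinates; this relies crucially on the fact, recorded in \eqref{def:freecumul2}, that $\log$-type object $\mathcal L_\prec(char(\phi))$ evaluated at the generator $b_I$ equals $\mathrm{k}_{|I|}(b_{i_1},\ldots,b_{i_{|I|}})=\phi(free_I)$. Once this translation is in place, the rest of the argument is the formal mirror of the Boolean case, with noncrossing partitions and $\mu_{\mathrm{NC}}$ playing the role of interval partitions and the Boolean M\"obius function.
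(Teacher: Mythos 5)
Your proposal is correct and follows essentially the same route as the paper: establish that the $free_I$ form a complete system of coordinates (via $free_I=b_I+\text{l.o.t.}$ from the M\"obius inversion \eqref{freec2}), derive the additivity $\phi\olessthan\psi(free_I)=\phi(free_I)+\psi(free_I)$ from Definition \ref{defnFreelawalg} together with \eqref{def:freecumul2}, and then invoke the general correspondence between the group and its Hopf algebra of representative functions to read off the primitive coproduct. The only cosmetic difference is that you spell out the triangularity and the duality formula $\Delta_f(f)(\phi,\psi)=f(\phi\olessthan\psi)$, which the paper leaves implicit by citing \cite{cp2021}.
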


In the following proposition, we use again freely the notation of Section \ref{sect:up}. We write $\mathcal F_1$ and $\mathcal F_2$ for two copies of the tensor algebra $T(\mathcal A)$ and $\phi_1$, $\psi_2$ for two copies of elements $\phi, \psi \in \mathcal G(\mathcal A)$ acting respectively on $\mathcal F_1$ and $\mathcal F_2$. Given an element $a$ in $\mathcal A$, we write $a'$ and $a^{\prime\prime}$ for copies of $a$ in $\mathcal F_1$ respectively  $\mathcal F_2$.

\begin{prop}[Free probability group of states, probabilistic definition]
\label{idprodf}
The set $\mathcal G(\mathcal A)$ equipped with the product
$$
	\phi \olessthan \!\psi(a_1\cdots a_n):=\phi_1 \ast_f \psi_2((a_1'+a_1'')\cdots (a_n'+a_n'')), 
$$
is a group that identifies with the free probability group of states defined previously.
\end{prop}

We will decompose the proof that the two definitions of the product agree into several steps.
In the following arguments, we will use only the fact that the probabilistic definition of the product implies its associativity (the existence of a group structure will follow later from the identification of the two definitions of the product).
Let us first compute explicitly the product law of $\mathcal G(\mathcal A)$ as defined in Proposition \ref{idprodf} in the coordinates $\mathcal Y$, that is, the corresponding coproduct that we denote temporarily $\Delta_f$. We get from the definition of the product $\ast_f$ of states and Equation \eqref{adaptedfree}
the coproduct map:
$$
	\Delta_f  \colon T^+(\mathcal{A}) \to S(T^+(\mathcal{A})) \otimes S(T^+(\mathcal{A})),
$$ 
\begin{equation}
\label{coprodfree}
	\Delta_f(a_1 \cdots a_n) 
	:= \sum_{S \subseteq [n]}\sum\limits_{(\pi^1,\pi^2)\in ANC_n^S}\alpha_{\pi^1,\pi^2}\Big(\prod_{\pi_j\in\pi^1}a_{\pi_j}\Big)\otimes \Big(\prod_{\pi_k\in\pi^2}a_{\pi_k}\Big).
\end{equation} 
It is extended multiplicatively to the polynomial algebra $S(T^+(\mathcal{A}))$ by  $\Delta_f(\mathbf{1}):= \mathbf{1} \otimes \mathbf{1}$ and
$$
	\Delta_f(w_1 | \cdots | w_n) := \Delta_f(w_1) \cdots \Delta_f(w_n).
$$
As observed above, its coassociativity follows by duality from the associativity of $\olessthan$.
We can calculate the coproduct at low orders from the inductive formula for universal free products. Let $a_1a_2a_3a_4 \in T^+(\mathcal{A})$.
We have
\begin{align*}
	\Delta_f(a_1 a_2) 
		&= a_1 a_2 \otimes \mathbf 1 + a_1 \otimes a_2 + a_2 \otimes a_1 +  \mathbf 1 \otimes a_1 a_2.\\
	\Delta_f(a_1 a_2 a_3) 
		&= a_1a_2a_3 \otimes \mathbf{1} + a_1\otimes a_2a_3 + a_1a_2\otimes a_3 + a_1a_3\otimes a_2\\
		&\quad + a_2 \otimes a_1a_3 + a_3 \otimes a_1a_2 + a_2a_3 \otimes a_1 + \mathbf{1} \otimes a_1a_2a_3\\
	\Delta_f(a_1 a_2 a_3a_4) 
		&=(\id+\tau)( a_1a_2a_3a_4\otimes \mathbf{1} + a_1\otimes a_2a_3a_4 
								+ a_1a_2\otimes a_3a_4 + a_1a_4\otimes a_2a_3\\
		&\quad + a_1a_2a_3\otimes a_4 + a_1a_2a_4\otimes a_3 + a_1a_3a_4 \otimes a_2\\
		&\quad + (a_1|a_3) \otimes a_2a_4 + a_1a_3 \otimes (a_2|a_4) - (a_1|a_3) \otimes (a_2|a_4) ).
\end{align*}
Here, as before, the map $\tau$ switches tensor products, i.e., $\tau(x\otimes y):=y\otimes x.$

We write $H_f$ for the Hopf algebra $S(T^+(\mathcal{A}))$ equipped with the product of polynomials and with the cocommutative coproduct \eqref{coprodfree}. The corresponding convolution product of linear forms on $H_f$ is denoted $\star_f$. 

\begin{lem}\label{lem:K_{f,I}nf_char}
Let $\kappa_1,\ldots,\kappa_k$ be infinitesimal characters on $H_f$ and $w=a_1\cdots a_n \in T(\mathcal{A})$ a word. Then, we have:
\begin{equation*}
	\scal{\kappa_1\star_f \cdots \star_f \kappa_k, w} 
	= \scal{\kappa_1\otimes \cdots \otimes \kappa_k, \sum_{\pi_1,\ldots, \pi_k} a_{\pi_1}\otimes\cdots \otimes a_{\pi_k}},
\end{equation*}
where the sum is over all ordered noncrossing partitions of $[n]$ (that is noncrossing partitions equipped with an arbitrary total ordering of their blocks).
\end{lem}

\begin{proof}
From the definition of $\Delta_f$, only noncrossing partitions can occur in the expansion of
$\scal{\kappa_1\star_f \cdots \star_f \kappa_k, w}$ that is obtained recursively from the associativity of $\star_f$. This expansion has necessarily the form
$$
	\scal{\kappa_1\otimes\cdots \otimes \kappa_k, 
	\sum_{\pi_1,\ldots, \pi_k}\mu_\pi \cdot a_{\pi_1} \otimes \cdots \otimes a_{\pi_k}},
$$
for certain scalar coefficients $\mu_\pi$, where the sum is over ordered noncrossing partitions. The lemma amounts therefore to the statement that $\mu_\pi=1$, for any $\pi$. 
However, as $\Delta_f$ is cocommutative, the $\star_f$ product is commutative, i.e., $\mu_\pi=\mu_{\sigma{\pi}}$, for any permutation $\sigma$ of $[k]$ acting on $\pi$ by permutation of the blocks. We can therefore obtain a proof of the statement by showing that $\mu_\pi=1$ in the particular case where the blocks of $\pi$ are in a total order compatible with the usual tree ordering of blocks of noncrossing partition (that is, the one defined by $\pi_i>\pi_j$ if and only if $\forall k\in \pi_i,\exists l,m\in \pi_j$ such that $l<k<m$). This choice implies in particular that the maximal block $\pi_k$ for the total order is an interval $\pi_k=\{p+1,\dots,m\}$. We require furthermore that it is the rightmost interval of $\pi$. This last condition implies (by general elementary properties of noncrossing partitions) that $\pi-\{\pi_k\}$ cannot be decomposed as the union of a partition of $[p]$ and of $\{m+1,\dots n\}$.

Let us prove now the property by induction on $k$. The base case $k=1$ is obvious. Let us fix an ordered noncrossing partition $\pi$ of $[n]$ with $k$ blocks as above: in particular we assume that $\pi_k=\{p+1,\ldots,m\}$ is the rightmost interval in $\pi$.
The formula for the $\star_f$ product shows that, for an arbitrary linear form $\beta$ on $H_f$, we have that (as $\kappa_k$ is infinitesimal):
\begin{align*}
	\scal{\beta\star_f \kappa_k, w}
	&=\sum\limits_{1\leq r<l\leq n}\scal{\beta\otimes \kappa_k, a_1\cdots a_{r-1}a_{l+1}\cdots a_n\otimes a_{r}\cdots a_l}\\
	&\quad +\sum\limits_{1<r<l< n}\scal{\beta\otimes \kappa_k, \alpha_{\{\{1,\ldots,r-1\},\{l+1,\ldots,n\}\},\{\{r,\ldots,l\}\}}(a_1\cdots a_{r-1}|a_{l+1}\cdots a_n)\otimes a_{r}\cdots a_l}.
\end{align*}	
As $\pi-\{\pi_k\}$ cannot be decomposed as the union of a partition of $[p]$ and of $\{m+1,\ldots, n\}$, only the component $\scal{\beta\otimes \kappa_k, a_1\cdots a_pa_{m+1} \cdots a_n \otimes a_{p+1}\cdots a_m}$ is relevant for our computation. Replacing $\beta$ by $\kappa_1\star_f \cdots \star_f \kappa_{k-1}$, we get that $\mu_\pi$ is the coefficient of
$$
	\scal{\kappa_1\otimes\cdots \otimes \kappa_{k-1},  a_{\pi_1} \otimes\cdots\otimes a_{\pi_{k-1}}},
$$
in the expansion of
$\scal{\kappa_1\star_f \cdots \star_f \kappa_{k-1}, a_1\cdots a_pa_{m+1}\cdots a_n}$, that is equal to one, by the induction hypothesis. This completes the proof.
\end{proof}

\begin{thm}\label{thmfundfree}
Let $(\mathcal A,\varphi)$ be a non-commutative probability space and $\Phi$ the extension of $\varphi$ to $H_f$ as a character. Let $\kappa$ be the infinitesimal character 
$$
	\kappa:=\log^{\star_f}(\Phi).
$$
For the word $w=a_1 \cdots a_n \in T^+(\mathcal A)$ we get:
\begin{equation*}
	\Phi(w)
		=\exp^{\star_f}(\kappa)(w) 
		= \sum_{j=1}^n \frac{\kappa^{\star_f j}}{j!}(w) 
		=\sum_{\pi \in NC_n} \kappa_\pi(a_1, \ldots, a_n).
\end{equation*}
In particular, $igm(\kappa)={\mathrm{k}}$, the free cumulant map.
\end{thm}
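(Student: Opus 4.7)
The strategy mirrors almost verbatim the proof of Theorem \ref{thmfundBool} given for the Boolean case, with Lemma \ref{lem:free_inf_char} playing the role that Lemma \ref{lem:boolean_inf_char} played there. All the real work has already been invested in establishing Lemma \ref{lem:free_inf_char}, so what remains is essentially a bookkeeping exercise plus a uniqueness argument for cumulants.

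First, I would specialise Lemma \ref{lem:free_inf_char} to the case $\kappa_1 = \cdots = \kappa_k = \kappa$. This gives
\[
\kappa^{\star_f k}(w)
= \sum_{(\pi_1,\ldots,\pi_k)} \prod_{i=1}^k \kappa(a_{\pi_i}),
\]
where the sum ranges over ordered noncrossing partitions of $[n]$ with exactly $k$ blocks (the case $k>n$ yields zero because $\kappa$ vanishes on $\mathbf 1$, so every block must be non-empty). Since each unordered noncrossing partition with $k$ blocks corresponds to precisely $k!$ orderings of its blocks, this simplifies to
\[
\kappa^{\star_f k}(w)
= k!\, \sum_{\substack{\pi \in NC_n \\ |\pi| = k}} \kappa_\pi(a_1,\ldots,a_n).
\]

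Second, summing over $k$ from $1$ to $n$ with the factors $1/k!$, and recalling that $\exp^{\star_f}(\kappa)(w)$ is a finite sum for $w$ of degree $n$, I obtain
\[
\exp^{\star_f}(\kappa)(w)
= \sum_{k=1}^n \frac{1}{k!}\kappa^{\star_f k}(w)
= \sum_{\pi \in NC_n} \kappa_\pi(a_1,\ldots,a_n).
\]
By the definition $\kappa := \log^{\star_f}(\Phi)$, the left-hand side equals $\Phi(w) = \varphi(w)$, which proves the displayed chain of identities.

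Finally, I would deduce $igm(\kappa) = \mathrm{k}$ by comparing the above identity with the defining moment-cumulant relation \eqref{freec}, namely $\varphi(w) = \sum_{\pi \in NC_n} \mathrm{k}_\pi(a_1,\ldots,a_n)$. Both $\kappa$ and $\mathrm{k}$ satisfy upper-triangular relations of the form $\kappa(a_1\cdots a_n) = \varphi(w) + \mathrm{l.o.t.}$ and $\mathrm{k}_n(a_1,\ldots,a_n) = \varphi(w) + \mathrm{l.o.t.}$, where in each case the lower-order terms depend only on evaluations of $\varphi$ on words of length strictly less than $n$. A routine induction on $n$ then yields $\kappa(a_1\cdots a_n) = \mathrm{k}_n(a_1,\ldots,a_n)$ for all $n \geq 1$, which is exactly the identity $igm(\kappa) = \mathrm{k}$. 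No step is expected to present a real obstacle; the combinatorial weight of the argument lies entirely in the already-established Lemma \ref{lem:free_inf_char}.
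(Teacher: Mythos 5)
Your proposal is correct and follows essentially the same route as the paper: specialise Lemma \ref{lem:free_inf_char} to $\kappa_1=\cdots=\kappa_k=\kappa$, absorb the $k!$ orderings of the blocks against the $1/k!$ in the exponential, and sum over $k$. The only difference is that you spell out the triangularity/induction argument identifying $igm(\kappa)$ with the free cumulant map $\mathrm{k}$, a step the paper leaves implicit after comparing with the moment-cumulant relation \eqref{freec}.
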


\begin{proof}
Indeed, by Lemma~\ref{lem:K_{f,I}nf_char}:
\[
	\kappa^{\star_f k}(w) = k! \sum_{\pi}\prod_{\pi_i} \kappa(a_{\pi_i}),
\]
where the sum is over all $\pi \in NC_n$ with $k$ blocks $\pi_1,\ldots,\pi_k$. The factor $k!$ appears since there are $k!$ ways to order the blocks. The conclusion follows by summation over $k$. 
\end{proof}

\begin{proof}[Proof of Proposition \ref{idprodf}]
We are now in the position to conclude the proof that the product in Proposition \ref{idprodf} identifies with the one in Definition \ref{defnFreelawalg}.  For notational clarity, we write temporarily $\olessthan'$ for the product defined in Proposition \ref{idprodf}.

By Corollary \ref{coprolinF}, it is enough to prove that, given two elements $\phi$ and $\psi$ in $\mathcal G(\mathcal A)$, 
$$
	\phi\olessthan'\!\psi (K_{f,I})=\phi (K_{f,I})+\psi (K_{f,I}),
$$
for $I$ arbitrary. For notational simplicity and without loss of generality, we treat the generic case where $I=(1,\dots,n)$. 

By Theorem \ref{thmfundfree}, we have $\log^{\star_f}(\Phi)(b_1\cdots b_n)=\text{k}(b_1\cdots b_n)=\phi (K_{f,I})$. Therefore, as $\Delta_f$ encodes the definition of the product $\olessthan'$,
$$
	\phi\olessthan'\psi (K_{f,I})=\Phi\star_f\Psi (K_{f,I})
	=\log^{\star_f}(\Phi\star_f\Psi)(b_1\cdots b_n),
$$
where we write $\Phi$ and $\Psi$ for the extension of $\phi$ and $\psi$ to characters on $H_f$.
However, as $\star_f$ is commutative, $\log^{\star_f}(\Phi\star_f\Psi)=\log^{\star_f}(\Phi)+\log^{\star_f}(\Psi)$, and we get finally
$$
	\phi\olessthan'\psi (K_{f,I})=(\log^{\star_f}(\Phi)+\log^{\star_f}(\Psi))(b_1\cdots b_n)
	=\phi(K_{f,I})+\psi(K_{f,I}),
$$
which yields $\phi\olessthan'\psi=\phi\olessthan\psi$ and concludes the proof.
\end{proof}

\medskip
\medskip



\begin{thebibliography}{99}
\parskip1.0ex

\bibitem{Akhiezer} 
N.~I.~Akhiezer, 
The classical moment problem and some related questions in analysis, 
Hafner Publishing Co., New York, 1965.

\bibitem{Arizmendi_15}
O.~Arizmendi, T.~Hasebe, F.~Lehner, C.~Vargas,
{\emph{Relations between cumulants in non-commutative probability}},
Advances in Mathematics~{\bf{282}}, (2015) 56-92.

\bibitem{bengohrschur}
A.~Ben Ghorbal, M.~Sch\"urmann,
{\emph{On the algebraic foundations of non-commutative probability theory}},
Preprintreihe Mathematik \textbf{8/99}, Universitat Greifswald (1999).

\bibitem{bengohrschur_02}
A.~Ben Ghorbal, M.~Sch\"urmann,
{\emph{Non-commutative notions of stochastic independence}},
Mathematical Proceedings of the Cambridge Philosophical Society~{\bf{133}}, (2002) 531-561.

\bibitem{bengohrschur_05}
A.~Ben Ghorbal, M.~Sch\"urmann,
{\emph{Quantum L\'evy processes on dual groups}},
Mathematische Zeitschrift~{\bf{251}},  (2005) 147-165.

\bibitem{biane_02}
P.~Biane,
{\emph{Free probability and combinatorics}}, 
Proceedings of the International Congress of Mathematicians, Vol.~II (Beijing, 2002), 765-774, 
Higher Ed.~Press, Beijing, 2002.

\bibitem{blyth2005}
T.~S.~Blyth, Lattices and ordered algebraic structures. Universitext. Springer-Verlag London, Ltd., London, 2005.

\bibitem{cp2021}
P.~Cartier, F.~Patras,
Classical Hopf algebras and their Applications,
Springer, 2021.

\bibitem{CelEbraPatPer2022}
A.~Celestino, K.~Ebrahimi-Fard, F.~Patras, D.~Perales, 
{\emph{Cumulant-Cumulant Relations in Free Probability Theory from Magnus’ Expansion}},
Foundations of Computational Mathematics {\bf{22}}, (2022) 733-755.

\bibitem{ebrahimipatras_15}
K.~Ebrahimi-Fard, F.~Patras,
{\emph{Cumulants, free cumulants and half-shuffles}}, 
Proceedings of the Royal Society of London. Series A, Mathematical and Physical Sciences~{\bf{471}}, (2015) 2176.

\bibitem{ebrahimipatras_16}
K.~Ebrahimi-Fard, F.~Patras,
{\emph{The splitting process in free probability theory}}
International Mathematics Research Notices {\bf{2016}} (9), 2647-2676.

\bibitem{ebrahimipatras_17}
K.~Ebrahimi-Fard, F.~Patras,
{\emph{Monotone, free, and boolean cumulants: a shuffle algebra approach}}, 
Advances in Mathematics {\bf{328}}, (2018) 112-132.

\bibitem{ebrahimipatras_19}
K.~Ebrahimi-Fard, F.~Patras,
{\emph{Shuffle group laws: applications in free probability}},
Proceedings of the London Mathematical Society~{\bf{119}}, (2019) 814-840.

\bibitem{ebrahimipatras_20}
K.~Ebrahimi-Fard, F.~Patras,
{\emph{A group-theoretical approach to conditionally free cumulants}},
IRMA Lectures in Mathematics and Theoretical Physics {\bf{31}}, (2020) 67-90.

\bibitem{efptz}
K.~Ebrahimi-Fard, F.~Patras, N.~Tapia, L.~Zambotti, 
{\it{Hopf-algebraic Deformations of Products and Wick Polynomials}},
International Mathematics Research Notices, {\bf{2020}}, No. 24, (2020) 10064-10099.

\bibitem{hasebesaigo_11}
T.~Hasebe, H.~Saigo,
{\textsl{The monotone cumulants}}, 
Annales de l'Institut Henri Poincar\'e - Probabilit\'es et Statistiques {\bf{47}}, No.~4, (2011) 1160-1170.

\bibitem{manzelschuermann_16}
S.~Manzel, M.~Sch\"urmann,
{\emph{Non-Commutative Stochastic Independence and Cumulants}},
Infinite Dimensional Analysis, Quantum Probability and Related Topics {\bf{20}}, Issue 02, (2017) 1750010 (38 p.).

\bibitem{mingospeicher_17}
J.~A.~Mingo, R.~Speicher,
Free Probability and Random Matrices,
Fields Institute Monographs {\bf{1}}, Springer-Verlag New York, 2017.
 
\bibitem{muraki_03}
N.~Muraki,
{\emph{The five independences as natural products}},
Infinite Dimensional Analysis, Quantum Probability and Related Topics {\bf{6}}, No.~3, (2003) 337-371.

\bibitem{nicaspeicher_06}
A.~Nica, R.~Speicher, 
Lectures on the combinatorics of free probability,
London Mathematical Society Lecture Note Series, {\bf{335}} Cambridge University Press (2006).

\bibitem{peccati_taqqu_11}
G.~Peccati, M.~S.~Taqqu, Wiener chaos: moments, cumulants and diagrams. A survey with computer implementation.
Bocconi \& Springer Series, 1. Springer, Milan; Bocconi University Press, Milan, 2011.


\bibitem{schuermann93}
M.~Schürmann,
White Noise on Bialgebras, 
Lecture Notes in Mathematics~{\bf{1544}}, Springer-Verlag, Berlin, 1993.

\bibitem{schutz} 
M.~P.~Sch\"utzenberger, 
{\emph{Contribution aux applications statistiques de la th\'eorie de l'information}}, 
Publications de l'Institut de statistique de l'Université de Paris {\bf{3}}, no.~1-2, (1954) 3.

\bibitem{speed} 
T.~P.~Speed, 
{\emph{Cumulants and partition lattices.~II. Generalised k-statistics}}, 
Journal of the Australian Mathematical Society {\bf{40}}, no.~1, (1986) 34-53.

\bibitem{speicher_90} 
R.~Speicher, 
{\emph{A New Example of `Independence' and `White Noise'}}, 
Probability Theory and Related Fields~{\bf{84}}, (1990) 141-159.

\bibitem{speicher1994multiplicative}
R.~Speicher,
\textit{Multiplicative functions on the lattice of non-crossing partitions and free convolution},
Mathematische Annalen {\bf{298}}(1), (1994) 611-628.


\bibitem{speicher_97a} 
R.~Speicher, R.~Woroudi,
{\it{Boolean convolution}},
In: Voiculescu, D. V. (ed.) Free Probability Theory. Proceedings, Toronto, Canada 1995, 
Fields Inst.~Commun.~{\bf{12}}, Providence, RI: Amer.~Math.~Soc., (1997) 267-279.

\bibitem{speicher_97b}
R.~Speicher, 
{\it{On universal products}}, 
In: Voiculescu, D.~V.~(ed.) Free Probability Theory. Proceedings, Toronto, Canada 1995, 
Fields Institute Communications~{\bf{12}}, Providence, RI: Amer.~Math.~Soc., (1997) 257-266.

\bibitem{voiculescu_92} 
D.~Voiculescu, K.~J.~Dykema, A.~Nica, 
Free random variables, 
CRM Monograph Series, vol.~1, American Mathematical Society, Providence, RI, 1992. 
A non-commutative probability approach to free products with applications to random matrices, operator algebras and harmonic analysis on free groups.

\bibitem{voiculescu_95} 
D.~Voiculescu, 
{\it{Free Probability Theory: Random Matrices and von Neumann Algebras}}, 
Proceedings of the International Congress of Mathematicians, Z\"urich, 
Switzerland 1994. Birkh\"auser Verlag, Basel, Switzerland (1995).

\bibitem{vWaldenfels85}
W.~von Waldenfels, 
{\it{Proof of an algebraic central limit theorem by moment generating functions}},
in: Stochastic Processes - Mathematics and Physics II, Lecture Notes in Math.~{\bf{1250}}, 342-347, Springer-Verlag, Berlin, 1985.

\end{thebibliography}
\end{document}